\newtheorem*{theorem*}{Theorem}
\newtheorem*{lemma*}{Lemma}
\newtheorem*{definition*}{Definition}
\DeclareMathOperator{\Sh}{Sh}
\numberwithin{equation}{section}
\begin{document}
\begin{abstract}
We introduce a geometric method to study additive combinatorial problems. Using equivariant cohomology we reprove the Dias da Silva--Hamidoune theorem. We improve a result of Sun on the linear extension of the Erd\H os--Heilbronn conjecture. We generalize a theorem of G. K\'os (the Grashopper problem) which in some sense is a simultaneous generalization of the Erd\H os--Heilbronn conjecture. We also prove a signed version of the Erd\H os--Heilbronn conjecture and the Grashopper problem. Most identities used are based on calculating the projective degree of an algebraic variety in two different ways.
\end{abstract}
\title{Additive combinatorics using equivariant cohomology}
\author{L\'aszl\'o M. Feh\'er}
\address{Department of Analysis, E\"otv\"os University, Budapest, Hungary}
\email{lfeher@renyi.mta.hu}
\author{ J\'anos Nagy }
\address{Central European University, Budapest, Hungary}
\email{janomo4@gmail.com}
\date{\today}
\thanks{The authors were supported by the grant NKFIH KKP 126683. The second author is also supported by
MTA-BME Lend\"ulet Arithmetic Combinatorics Research Group,
  ELKH, M\H{u}egyetem rkp. 3., H-1111 Budapest, Hungary;
  Department of Computer Science and Information Theory, Budapest University of Technology and Economics, M\H{u}egyetem rkp. 3., H-1111 Budapest, Hungary.
}
\maketitle
\tableofcontents
\section{Introduction}
In the present paper we would like to show a connection between  additive combinatorics and equivariant cohomology and that this connection leads to new results.
\subsection{Known results using equivariant cohomology}
A prototypical theorem in additive  combinatorics is the Dias da Silva--Hamidoune theorem in \cite{Silva-Hamidoune}, conjectured by Erd\H os and Heilbronn:
\begin{theorem*}
Let $A \subset \mathbb{F}_{p}$ be a set, such that $|A| = n$ and $1 \leq k \leq n$, and let us use the notation
  \[    \bigwedge^kA := \left\{ \sum_{i=1}^{ k}a_i\  | \ a_i \in A ,\ a_i \neq a_j  \text{ if } i \neq j \right\}.  \]
Then  $ |\bigwedge^kA| \geq \min\{(n-k)k+1, p \}$ holds.
\end{theorem*}

Most proofs of the Dias da Silva--Hamidoune theorem and similar results in additive  combinatorics uses the so called polynomial method of \cite{anr}, which uses polynomial interpolation formulas, like the Coefficient Formula of \cite{schauz}, \cite{Karasev2012} and \cite{lason} (See also \cite[Lemma 1.1]{karolyi-nagy-petrov}):

\begin{lemma*} Let $f\in \F[x_1,\dots,x_n]$ be a polynomial of degree $\deg(f)\leq d_1+d_2+\cdots+d_n$. For arbitrary subsets $C_1,\dots,C_n$
of $\F$ with $|C_i| = d_i + 1$, the coefficient of $\prod x_i^{d_i}$ in $f$ is
\[\sum_{c_ 1\in C_1 }\sum_{c_ 2\in C_ 2} \cdots \sum_{c_n \in C_n } \frac{f(c_1,c_2,\dots,c_n)}
{\phi^{'}_1 (c_1 )\phi^{'}_2 (c_2 )\cdots \phi^{'}_n (c_n )} ,\]
where $\phi_i(z)=\prod_{c\in C_i }(z-c)$.
\end{lemma*}
 We will review the polynomial method in Section  \ref{sec:coeff} and show that the Coefficient Formula can be interpreted quite naturally as calculating the integral of an equivariant cohomology class over a product of projective spaces in two different ways. This way we can translate the existing polynomial methods to equivariant cohomology.

 But we can do better: Instead of  a product of projective spaces  we can choose an other compact complex homogeneous space which fits better to the additive  combinatorics problem we want to solve. For example the Dias da Silva--Hamidoune theorem leads naturally to an integral over the Grassmannian $\Gr_k(\C^n)$. In this geometric context the number $(n-k)k$ appearing in the estimate of the Dias da Silva--Hamidoune theorem can be interpreted as the dimension of the Grassmannian $\Gr_k(\C^n)$, as it can be seen from \eqref{eq:subst-grass-int}.   We present a new short proof of the Dias da Silva--Hamidoune theorem based on these ideas in  Section \ref{sec:eh}.

\subsection{New results}
For other problems we can explore the geometry of other homogeneous spaces. For example using partial flag manifolds we prove two theorems overlapping with a conjecture of Sun on a linear extension of the Erd\H os--Heilbronn conjecture:

\begin{theorem*}{\bf\ref{sunstrong}}
Let $A$ be a  subset of cardinality $n$ of a field $\F$ with characteristic $ p(\F)$ and let $u_1, \dots, u_k \in \F \setminus 0$. If $p(\F) > d $ then the following holds:
\begin{equation*}
\left|\left\{\sum_{i=1}^k u_i a_i  : a_1, \dots, a_k \in A , \text{ and }a_i \neq a_j \text{ if }i\neq j\right\}\right| \geq d+1,
\end{equation*}
for
\[d:= k(n-k) + \sum_{1 \leq i<j \leq t}k_i k_j,\]
where  $\{k_1,\dots,k_t\}$ denote the multiplicities of the $u_i$'s.
\end{theorem*}
\medskip

\begin{theorem*}{\bf\ref{sun2}}
Let $u_1, \dots, u_ n \in \F $ be different numbers of a field $\F$ with characteristic $ p$ and let $A=\{a_1,\dots,a_n\}$ be a  subset of $\F$.
If $n>3$ and $p  \leq \binom{n}2 $ then the following holds:
\begin{equation*}
\left|\left\{\sum_{i=1}^{n}  u_i a_{\pi(i)}  : \pi \in S_n  \right\}\right| = p.
\end{equation*}
\end{theorem*}
\medskip

Studying the full flag manifold leads us to a  generalization of the Erd\H os--Heilbronn problem in characteristics 0:

\begin{definition*}{\bf\ref{de:admissible}}
A sequence of non negative integers $\mathbf{b}=(b_1,\dots,b_{k-1})$ is admissible if for all $M_i\subset\Z$ and $|M_i|=b_i$, and for all $a_1,a_2,\dots,a_{k}$  distinct integers there is a permutation $\pi\in S_k$ such that for all $j=1,\dots k-1$
\[ \sum_{i=1}^{j}a_{\pi(i)}\notin M_j.\]
\end{definition*}

\begin{theorem*} {\bf\ref{szocsalt-yes}} A sequence $\mathbf{b}$ is admissible if and only if it satisfies the following system of linear inequalities:
for any subset $P \subset \{1, \dots ,k-1 \}$ the  condition
\begin{equation*}\label{hall-system}
   \sum\limits_{p \in P}b_p\ \ \leq \ \ \Big|\{(i,j):\exists\, p\in P, \text{ such that } 1\leq i \leq p \leq j\leq k-1\}\Big|
\end{equation*}
holds.
\end{theorem*}

 We refer to this theorem as the Grasshopper problem for historical reasons explained later. Notice that for sequences $\mathbf{b}$ with only one non zero $b_i$ term, Theorem \ref{szocsalt-yes} specializes to the Erd\H os--Heilbronn theorem in 0 characteristics.

 In Theorem \ref{szocsalt2} we  generalize Theorem \ref{szocsalt-yes} by showing that for small admissible sequences we need only  permutations small in the Bruhat order to avoid the sets $M_i$ by studying the degrees of Schubert varieties in the flag manifold.

All these examples are related to homogeneous spaces of the general linear Lie group $\GL(n):=\GL(n,\C))$. However the techniques can be easily modified for other Lie groups.

In Section \ref{sec:sp} we study the the complex symplectic group $\Sp(2n)$. This leads us to signed versions of the previous problems. For example we prove the signed Erd\H os--Heilbronn theorem:

\textbf{Theorem \ref{thm:signed-eh}.}
\emph{If $A$ is a set of distinct non zero residue classes $a_1,\dots, a_{n}$ modulo $p$, $a_i + a_j \neq 0$  and $ p > 2k(n-k) + \binom{k+1}{2}$, then:}
\begin{equation*}
 \left| \left\{\sum_{i\in I}\pm a_i\ \big| \ I \subset (1, \dots, n) ,|I| = k\right\}\right | > 2k(n-k) + \binom{k+1}{2}.
\end{equation*}

The main tool of the paper is that we calculate the degree of a torus-invariant projective variety in two different ways: using the Atiyah--Bott--Berline--Vergne integration formula (ABBV formula for short) and the Borel--Hirzebruch formula. We included an appendix with a very short introduction to these formulas.

An interesting aspect of the method is that every lower bound obtained is the dimension of the  homogeneous manifold used for the particular problem.

\section*{Acknowledgement} We thank Gyula K\'arolyi for inspiring discussions on the topic.
\section{Warmup: The Cauchy--Davenport theorem, connections with Lagrange interpolation and the Combinatorial Nullstellensatz}\label{sec:cd}
All the examples in this section can easily be proved by elementary or classical arguments. The section serves as an introduction to the cohomological method we are going to use in the paper. First we reprove the Cauchy--Davenport Theorem using the ABBV formula:

\begin{theorem}
Let $A $ and $B$ be two subsets of the field $\F_p$ with $p$ elements, where $p$ is a prime. Assume that $|A| = r$, $|B| = s$ and $r+s-1 \leq p$, and let $A+B$ denote the sumset
   \[A+B= \{ a +b\ | \ a \in A , b \in B \}.\]
Then $ |A+B| \geq r+s-1$.
\end{theorem}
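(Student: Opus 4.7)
I will argue by contradiction, assuming $|A+B|\leq r+s-2$. After, if necessary, enlarging $C:=A+B$ to a set of size exactly $r+s-2$ (adding extra elements only multiplies the test polynomial below by further linear factors and cannot spoil its vanishing on $A\times B$), I form
\[
 P(x,y)\;=\;\prod_{c\in C}(x+y-c),
\]
which has total degree $r+s-2$ and vanishes on $A\times B$. Its coefficient of $x^{r-1}y^{s-1}$ is captured entirely by the top-degree piece $(x+y)^{r+s-2}$ and equals $\binom{r+s-2}{r-1}$; this is nonzero modulo $p$ since $r+s-1\leq p$.

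The geometric input will be the ABBV localization formula applied to $\mathbb{P}^{r-1}\times\mathbb{P}^{s-1}$, equipped with the standard torus action whose weights at the fixed point $[e_i]\times[e_j]$ are $(a_i,b_j)$, with $A=\{a_1,\dots,a_r\}$ and $B=\{b_1,\dots,b_s\}$ playing the role of equivariant parameters. Localization then yields, for every polynomial $F(x,y)$, the interpolation identity
\[
 \int_{\mathbb{P}^{r-1}\times\mathbb{P}^{s-1}}F(H_1,H_2)\;=\;\sum_{i,j}\frac{F(a_i,b_j)}{\prod_{k\neq i}(a_i-a_k)\prod_{\ell\neq j}(b_j-b_\ell)},
\]
where $\int$ is the equivariant integral (pushforward to a point) and $H_1,H_2$ are the equivariant hyperplane classes of the two factors. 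Since this is a universal polynomial identity in the parameters $a_i,b_j$, it can be specialized to the elements of $A,B\subset\F_p$ as long as all denominator differences remain nonzero, which is automatic because the $a_i$ (and the $b_j$) are distinct.

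Applying the identity with $F=P$ then closes the argument. The right-hand sum vanishes: for every $(i,j)$ the sum $a_i+b_j$ belongs to $C$, so $P(a_i,b_j)=0$. The left-hand side is a constant because $\deg P=\dim(\mathbb{P}^{r-1}\times\mathbb{P}^{s-1})=r+s-2$; by a dimension/bidegree count (a monomial $x^ky^\ell$ contributes $h_{k-r+1}(a)\,h_{\ell-s+1}(b)$, which vanishes unless $k\geq r-1$ and $\ell\geq s-1$, so among the monomials in $P$ only $x^{r-1}y^{s-1}$ survives) this constant equals $\binom{r+s-2}{r-1}$. The two evaluations give $0\equiv\binom{r+s-2}{r-1}\pmod p$, contradicting the choice of $p$.

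The one step requiring real care will be the passage of the interpolation identity from complex equivariant cohomology to the $\F_p$ setting; my plan is to treat it explicitly as a polynomial identity in the equivariant parameters, where the specialization is harmless. Everything else is bookkeeping with binomial coefficients and a dimension count, so I expect no further obstacle.
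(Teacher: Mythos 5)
Your proposal is correct and is essentially the paper's argument: both apply ABBV localization to $\mathbb{P}^{r-1}\times\mathbb{P}^{s-1}$ (the paper phrases this via the Segre variety $S_{r,s}\cong\mathbb{P}^{r-1}\times\mathbb{P}^{s-1}\subset\mathbb{P}(U\otimes V)$), obtain the same rational identity, specialize the equivariant parameters to $A$ and $B$, and derive the contradiction $0\equiv\binom{r+s-2}{r-1}\pmod p$. The only cosmetic difference is how the left-hand side is evaluated: the paper quotes the classical degree of the Segre variety, while you read off the coefficient of $x^{r-1}y^{s-1}$ in $P$ directly from the top-degree piece $(x+y)^{r+s-2}$ — an equivalence the paper itself notes in its discussion of the Coefficient Formula.
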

\begin{proof}

Let $V$ be an $r$-dimensional complex vector space with basis $e_1,\dots, e_r$ and $U$ be an $s$-dimensional complex vector space with basis $f_1, \dots, f_s$.
The group $G_1=\GL(r)$ acts on $V$. Let $T(r)$ denote the compact maximal torus of diagonal matrices  corresponding to the basis $e_1, \dots, e_r$.
Similarly $G_2=\GL(s)$ acts on $U$ and $T(s)$ is the compact maximal torus corresponding to the basis $f_1, \dots, f_s$.

Consider the map $ U \times V  \to U \otimes V $ defined by the tensor product.
The projectivization of this map is the Segre embedding $ \P(U) \times \P(V ) \to \P(U \otimes V) $.

The Segre variety $S_{r, s}$ is the image of the Segre embedding. It is an invariant subvariety of  the action of the group $\GL(r) \times \GL(s)$ on the space $ \P(U \otimes V) $, in fact it is the unique closed orbit.

By a classical formula the degree of the Segre variety is ${{r+s-2} \choose {r-1}}$. It is classically calculated as an integral:

\begin{equation} \label{eq:deg-segre}
  \deg\big(\P(U) \times \P(V ) \subset \P(U \otimes V)\big)=\binom{r+s-2}{r-1}=\int\limits_{S_{r, s}} c_1^{r+s-2},
\end{equation}

where $c_1$ is the restriction of the first Chern class of the dual of the tautological line bundle $L=\mathcal{O}(1)\to \P(U \otimes V)$.

We can also compute this degree  using equivariant cohomology and the Atiyah--Bott--Berline--Vergne (ABBV for short) integration formula.

\begin{proposition}\cite{atiyah-bott}\label{ab} Suppose that $X$ is a compact manifold and $T$ is a torus acting smoothly on $X$, and the fixed point set $F(X)$ of the $T$-action on $X$ is finite. Then for any cohomology class $x\in H_T^*(X):=H_T^*(X;\C)$
\begin{equation}\label{abformula}\int\limits_X  x=\sum_{f\in F(X)} \frac{x|_f}{e(T_{f}X)}.\end{equation}
Here $e(T_fX)$ is the $T$-equivariant Euler class of the tangent space $T_fX$. The right side is considered in the fraction field of the polynomial ring of $H^*_T=H^*(BT)$ (see more details in \cite{atiyah-bott}): part of the statement is that the denominators cancel when the sum is simplified.
\end{proposition}
First notice that $L$ is a $T(r) \times T(s)$ vector bundle---i.e. the $T(r)\times T(s)$-action lifts to $L$ acting linearly on the fibers---so it has an equivariant first Chern class $\hat c_1$. Because the degree of the integrand is the dimension, any equivariant lift will give the same integral:
\begin{equation} \label{eq:deg-segre-equi}
  \deg\big(\P(U) \times \P(V ) \subset \P(U \otimes V)\big)=\int\limits_{S_{r, s}} \hat c_1^{r+s-2},
\end{equation}
The fixed points of the action of the maximal torus $T(r) \times T(s)$ are the lines spanned by the vectors $e_i \otimes f_j$,
and applying the ABBV formula for $X=S_{r, s}\iso\P^{r-1}\times \P^{s-1}$ we get
\begin{equation}\label{eq:deg-segre2}
 {{r+s-2} \choose {r-1}}  =  \sum_{ i=1}^r \sum_{j=1}^s\frac{(x_i + y_j)^{r+s-2}}{\prod\limits_{k \neq i}(x_i - x_k) \cdot \prod\limits_{l \neq j}(y_i - y_k) },
\end{equation}
where $H^*_{T(r) \times T(s)}=\C[x_1,\dots,x_{r},y_1,\dots,y_{s}]$. We used that the tangent bundle of a projective space is of the form $\Hom (S,Q)$, where $S$ and $Q$ denote the tautological sub- and quotient bundles, respectively.

The integral of a cohomology class of degree less then the real dimension of the manifold on which we integrate is zero. Consequently, for  any set of elements $M$ in any field with $|M|= r+s-2$ the following identity holds:
\begin{equation}\label{eq:deg-segre3}
 {{r+s-2} \choose {r-1}}  =  \sum_{ i=1}^r \sum_{j=1}^s\frac{\prod\limits_{m \in M}(x_i + y_j - m)}{\prod\limits_{k \neq i}(x_i - x_k) \cdot \prod\limits_{l \neq j}(y_j - y_l) }
\end{equation}
\begin{remark}
The idea of adding constants to the linear factors of an identity is a very effective trick used by many authors in this field. We could trace it back to \cite{anr}, but it is also used in \cite{Karasev2012} and \cite{preismann} and several other places. Geometrically this trick corresponds to tensoring the line bundle $L$ with various other line bundles, but we will not explore this connection in this paper.
\end{remark}

If $|A+B| \leq r+s-2 $ then we can choose $M$, such that $ A+B \subset M \subset \F_p$.
Substitute now the elements of the set $A$ into the $x$ variables and the element of the set $B$ into the $y$ variables in the identity (\ref{eq:deg-segre3}).

All terms of the right hand side are divisible by $p$, but the left hand side is  not divisible by $p$, if $r+s-2 \leq p$.

This contradiction proves the theorem.
\end{proof}
\subsection{Cohomological proof of the Coefficient Formula} \label{sec:coeff}
The readers may notice that identity (\ref{eq:deg-segre2}) is a special case of the  Coefficient Formula below, proved and used e.g. in \cite[Lemma 1.1]{karolyi-nagy-petrov}. For the history of the Coefficient Formula see loc. cit.
\begin{lemma}\label{CF} \textbf{(Coefficient Formula)} Let $f\in \F[x_1,\dots,x_n]$ be a polynomial of degree $\deg(f)\leq d_1+d_2+\cdots+d_n$. For arbitrary subsets $C_1,\dots,C_n$
of $\F$ with $|C_i| = d_i + 1$, the coefficient of $\prod x_i^{d_i}$ in $f$ is
\[\sum_{c_ 1\in C_1 }\sum_{c_ 2\in C_ 2} \cdots \sum_{c_n \in C_n } \frac{f(c_1,c_2,\dots,c_n)}
{\phi^{'}_1 (c_1 )\phi^{'}_2 (c_2 )\cdots \phi^{'}_n (c_n )} ,\]
where $\phi_i(z)=\prod_{c\in C_i }(z-c)$.
\end{lemma}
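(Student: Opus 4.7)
The plan is to verify the identity term-by-term in the monomial expansion of $f$. By linearity in $f$, it suffices to prove the formula when $f$ is a single monomial $\prod_{i=1}^n x_i^{e_i}$ with $\sum_i e_i\le \sum_i d_i$. For such an $f$ the right-hand side factors completely as
\[
\sum_{c_1\in C_1}\!\cdots\!\sum_{c_n\in C_n}\frac{\prod_i c_i^{e_i}}{\prod_i \phi_i'(c_i)} \;=\; \prod_{i=1}^n\left(\sum_{c\in C_i}\frac{c^{e_i}}{\phi_i'(c)}\right),
\]
reducing the problem to a one-variable computation for each index $i$.

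The key univariate identity I would establish first is that, for a set $C\subset\F$ of size $d+1$ with $\phi(z)=\prod_{c\in C}(z-c)$,
\[
\sum_{c\in C}\frac{c^e}{\phi'(c)}=\begin{cases}0 & \text{if } 0\le e<d,\\ 1 & \text{if } e=d.\end{cases}
\]
I would derive this from classical Lagrange interpolation: for any $g$, the polynomial $\sum_{c\in C}g(c)\,\phi(z)/\bigl((z-c)\phi'(c)\bigr)$ is the interpolant of $g$ at $C$, and its leading coefficient (the coefficient of $z^d$) is exactly $\sum_{c\in C}g(c)/\phi'(c)$ because $\phi(z)/(z-c)$ is monic of degree $d$. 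Applied to $g(z)=z^e$ with $e\le d$, where the interpolant is $z^e$ itself, this yields the stated values.

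Combining the pieces, on a monomial $\prod x_i^{e_i}$ with every $e_i\le d_i$, the product is $1$ precisely when $e_i=d_i$ for all $i$ and vanishes otherwise, which matches the coefficient of $\prod x_i^{d_i}$ in that monomial. The only delicate case -- and the step I view as the main obstacle -- is when some $e_{i_0}>d_{i_0}$, because then the univariate identity above does not directly control the factor at $i_0$. This is precisely where the global degree bound $\sum_i e_i\le \sum_i d_i$ is essential: it forces $\sum_{j\ne i_0} e_j<\sum_{j\ne i_0} d_j$, so some $e_j<d_j$ for $j\ne i_0$, and the corresponding factor vanishes by the univariate identity, killing the whole product. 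Thus in every case the right-hand side agrees with the coefficient of $\prod x_i^{d_i}$ in the monomial, completing the proof after summing over all monomials in $f$.
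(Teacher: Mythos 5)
Your proof is correct, and it is the standard elementary argument for the Coefficient Formula via Lagrange interpolation: reduce to a single monomial by linearity, factor the $n$-fold sum into a product of one-variable sums, and invoke the univariate identity that $\sum_{c\in C}c^e/\phi'(c)$ is the leading coefficient of the degree-$\leq d$ Lagrange interpolant of $z^e$ at the $d+1$ nodes in $C$. You also correctly identify where the global degree bound $\sum e_i\leq\sum d_i$ is actually used, namely to dispose of a monomial with some $e_{i_0}>d_{i_0}$ by forcing another exponent $e_j<d_j$ whose factor vanishes. The paper, however, takes a genuinely different route: rather than proving the lemma elementarily, it exhibits the formula as a special case of the Atiyah--Bott--Berline--Vergne localization formula applied to the product of projective spaces $X=\P^{d_1}\times\cdots\times\P^{d_n}$ with its natural torus action, the fixed points supplying the evaluation tuples $(c_1,\dots,c_n)$, the equivariant Euler classes of the tangent spaces supplying the denominators $\prod_i\phi_i'(c_i)$, and the integral over $X$ extracting the coefficient of $\prod_i\alpha_i^{d_i}$. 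Your argument is more elementary and works over an arbitrary field, which the geometric derivation does not (the paper is careful to say ``at least for $\F=\C$''); on the other hand, the paper's derivation exposes the geometric meaning of the identity, which is precisely the thematic point of that section: the Coefficient Formula, the Combinatorial Nullstellensatz and the Alon--Nathanson--Ruzsa lemma all arise as instances of ABBV on products of projective spaces, and replacing the product of projective spaces by other homogeneous spaces is what produces the new identities exploited in the later sections.
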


We claim that this formula can be deduced from the ABBV formula, at least for $\F=\C$. Let the manifold $X$ be the product of projective spaces:
  \[X=\bigtimes_{i=1}^n \P^{d_i}.\]
We have the torus $T(d_i+1)$ acting on $\P^{d_i}$ induced by its diagonal action on $\C^{d_i+1}$, so the product of these tori will act on $X$. Let $x_i$ denote the equivariant first Chern class of the dual of the tautological line bundle of $\P^{d_i}$. Applying the ABBV formula for the integral of $f(\alpha_1,\dots,\alpha_n)$, where $\alpha_i$ is the first Chern class of the canonical bundle of $\P^{d_i}$ we obtain the Coefficient Formula by noticing that the integral of a polynomial $f(\alpha_1,\dots,\alpha_n)$ on  $\bigtimes_{i=1}^n \P^{d_i}$ is nothing else, than the coefficient of $\prod \alpha_i^{d_i}$ in $f$.

You can also use the Combinatorial Nullstellensatz of \cite{alon-null} to prove the Cauchy--Davenport theorem. As noted in \cite{karolyi-nagy-petrov}, the Combinatorial Nullstellensatz is an immediate corollary of Lemma \ref{CF}.

To prove the  Cauchy--Davenport theorem you can also refer to the Alon--Nathanson--Ruzsa Lemma \cite[Thm 2.1]{anr}, which is also an immediate corollary of Lemma \ref{CF}.

Summing it up, the Coefficient Formula, the Combinatorial Nullstellensatz and  the Alon--Nathanson--Ruzsa Lemma can be quickly proved using the ABBV formula for the product of projective spaces. Also, in some cases we know that the coefficient in question is positive because of geometric reasons: it is the degree of a projective variety.

In the following sections we show that using other manifolds with torus actions we can arrive to less familiar identities.

\section{Dias da Silva--Hamidoune theorem revisited} \label{sec:eh}
In this section first we reprove the conjecture of Erd\H os and Heilbronn (published in \cite{erdos-graham}) first proved by Dias da Silva and Hamidoune in \cite{Silva-Hamidoune}:

\begin{theorem}\label{thm:eh}
Let $A \subset \mathbb{F}_{p}$ be a set, such that $|A| = n$ and $1 \leq k \leq n$, and let us use the notation
  \[    \bigwedge^kA := \left\{ \sum_{i=1}^{ k}a_i\  | \ a_i \in A ,\ a_i \neq a_j  \text{ if } i \neq j \right\}.  \]
Then  $ |\bigwedge^kA| \geq \min\{(n-k)k+1, p \}$ holds.
\end{theorem}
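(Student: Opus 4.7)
The plan is to mirror the Cauchy--Davenport argument, but with the Grassmannian $\operatorname{Gr}(k,n)$ and its Pl\"ucker embedding $\operatorname{Gr}(k,n)\hookrightarrow\P(\Lambda^k\C^n)$ in place of the Segre variety. The diagonal maximal torus $T\subset\GL(n)$, with equivariant parameters $x_1,\dots,x_n$, acts with isolated fixed points: the coordinate $k$-planes $W_I$ indexed by $k$-subsets $I\subseteq\{1,\dots,n\}$. A routine computation shows that the tangent space at $W_I$ decomposes under $T$ with weights $\{x_i-x_j:j\in I,\ i\notin I\}$, while the equivariant first Chern class $\hat{c}_1$ of the Pl\"ucker line bundle restricts to $\sum_{i\in I}x_i$ (the weight of the line spanned by $e_{i_1}\wedge\dots\wedge e_{i_k}$).

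I would then apply ABBV to $\hat{c}_1^{k(n-k)}$, which computes the projective degree $d_{k,n}$ of the Pl\"ucker embedding. The hook length formula identifies $d_{k,n}$ with the number of standard Young tableaux of the $k\times(n-k)$ rectangle; since every hook length in this rectangle is at most $n-1$ and every factor of $(k(n-k))!$ is at most $k(n-k)$, one has $d_{k,n}\not\equiv 0\pmod p$ whenever $p>k(n-k)$. Exactly as in the Cauchy--Davenport proof, every contribution to $\int_{\operatorname{Gr}(k,n)}\hat{c}_1^{k(n-k)}$ comes from the top-degree monomial, so $\hat{c}_1^{k(n-k)}$ may be replaced by $\prod_{m\in M}(\hat{c}_1-m)$ for any size-$k(n-k)$ set $M$, yielding the identity
\begin{equation*}
d_{k,n} \;=\; \sum_{|I|=k}\frac{\prod_{m\in M}\bigl(\sum_{i\in I}x_i-m\bigr)}{\prod_{j\in I,\,i\notin I}(x_i-x_j)}.
\end{equation*}

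For the regime $(n-k)k+1\le p$ I would argue by contradiction. Assuming $|A_k|\le k(n-k)$, pick $M\subset\F_p$ of size exactly $k(n-k)$ with $A_k\subseteq M$, and substitute $x_i\mapsto a_i$. Every numerator on the right-hand side contains the vanishing factor corresponding to $m=\sum_{i\in I}a_i\in A_k\subseteq M$, while the denominators remain nonzero since the $a_i$ are distinct. The right-hand side is thus $0$ modulo $p$, contradicting the nonvanishing of $d_{k,n}$ on the left, so $|A_k|\ge k(n-k)+1$.

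The step I expect to be the genuine obstacle is the complementary regime $(n-k)k+1>p$, in which one must conclude $A_k=\F_p$ but cannot fit a size-$k(n-k)$ set $M$ inside $\F_p$. I would handle this by reducing to the previous case: choose $k'\le k$ and a subset $A'\subseteq A$ so that $k'(|A'|-k')$ is as large as possible subject to $k'(|A'|-k')<p$, apply the main case to $A'_{k'}$, and then propagate the resulting near-equality $|A'_{k'}|=p$ upward to $A_k$ via translation by fixed partial sums of the remaining elements of $A$. Apart from this reduction, the remaining ingredients --- the tangent weights, the restriction formula for $\hat{c}_1$, and the mod $p$ non-divisibility of $d_{k,n}$ --- are routine verifications.
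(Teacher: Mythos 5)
Your first case, $p > k(n-k)$, matches the paper's argument essentially step for step: the same Grassmannian and torus fixed-point data, the same equivariant first Chern class restriction $\hat c_1|_{W_I}=\sum_{i\in I}x_i$ and tangent weights $x_i-x_j$ ($i\in I$, $j\notin I$), the same dehomogenized ABBV identity with a size-$k(n-k)$ set $M$, and the same mod-$p$ non-divisibility of the Pl\"ucker degree (your hook-length phrasing and the paper's Borel--Hirzebruch product are two ways of saying the same thing). No problems there.

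The gap is exactly where you expect it: the regime $p\le k(n-k)$. Your proposed reduction picks $k'\le k$ and $A'\subseteq A$ with $d':=k'(|A'|-k')$ as large as possible subject to $d'<p$, and then invokes the first case. But the first case only delivers $|A'_{k'}|\ge d'+1$, which equals $p$ precisely when $d'=p-1$; that forces $k'$ to be a divisor of $p-1$ with $k'+(p-1)/k'\le n$, and such a divisor need not exist. Concretely, take $p=23$, $k=5$, $n=10$: then $k(n-k)=25\ge p$, so we are in the second regime, yet the divisors of $22$ are $1,2,11,22$, and the admissible ones $k'\in\{1,2\}$ give $(p-1)/k'+k'\in\{23,13\}$, both exceeding $n=10$. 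The best achievable value is $d'=21$ (at $k'=3$, $|A'|=10$), yielding only $|A_3|\ge 22$, and your subsequent ``translation by partial sums'' is vacuous here since $A'=A$; in any case translating a set of size $p-1$ by finitely many constants cannot by itself show a related set has all $p$ residues.

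The paper handles $p\le k(n-k)$ by staying inside the same $\operatorname{Gr}_k(\mathbb{C}^n)$ and integrating against an equivariant Schubert class rather than shrinking the Grassmannian. Schubert varieties $\sigma_I$, $I=\{I_1<\cdots<I_k\}\subseteq\{1,\dots,n\}$, have dimension $\sum_j I_j-\binom{k+1}{2}$, which realizes every integer in $[0,k(n-k)]$, so one may always choose $\sigma_I$ of dimension exactly $p-1$. Applying ABBV to $\hat c_1^{\,p-1}\cdot[\sigma_I]_T$ and dehomogenizing with a size-$(p-1)$ set $M\supseteq A_k$ gives an identity whose left-hand side is the classical Schubert degree
\[
\deg(\sigma_I)=\frac{(p-1)!}{(I_1-1)!\cdots(I_k-1)!}\prod_{i<j}(I_j-I_i),
\]
a positive integer built entirely from factors $<p$ (since $I_j\le n\le p$), hence coprime to $p$, while the right-hand side vanishes mod $p$ after substituting $A$ into the $x_i$'s. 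So the missing idea is to replace the fundamental class of the Grassmannian by an equivariant Schubert class of the correct degree, which supplies a cycle of dimension $p-1$ for every prime $p\le k(n-k)$ without ever leaving $\operatorname{Gr}_k(\mathbb{C}^n)$.
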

\begin{proof}
Consider the action of $\GL(n)$ on the alternating $k$-forms $V=\bigwedge^k\C^n$. Then the minimal orbit in $\P V$ is the Grassmannian $\gr_k(\C^n)$ embedded into $\P V$ via the Pl\"ucker embedding.
There are many ways to calculate the degree of this embedding, for example using Schubert calculus and the Pieri rule, as Schubert calculated originally (see also \cite{fulton}) or using the Borel--Hirzebruch formula \cite{borel-hirzebruch2} (see also \cite[sec. 6]{gross-wallach} for details on the degree of the Grassmannian), the answer is the following classical formula:

\begin{equation}\label{eq:deg-grass-bh}
  \deg\big(\gr_k(\C^n)\big)=\int\limits_{\gr_k(\C^n)} \!c_1^{k(n-k)}=\big(k(n-k)\big)!\cdot\prod_{i=1}^k\frac{(i-1)!}{(n-i)!},
\end{equation}
 where $c_1$ is the first Chern class of the dual of the tautological bundle.

The result is in the 0-th cohomology $H^0(\gr_k(\C^n))\iso\Z$. Since we also have $H^0_{T(n)}(\gr_k(\C^n))\iso\C$, and forgetting the action induces isomorphism between these two $\C$'s, we can simply replace the first Chern class with the $T(n)$-equivariant one:
\begin{equation} \label{eq:deg-grass-equi}
  \deg\big(\gr_k(\C^n)\big)=\int\limits_{\gr_k(\C^n)}\! \hat c_1^{k(n-k)}.
\end{equation}
 Recall that the torus $T(n)$ acts on the Grassmannian as restriction of the $\GL(n)$-action to the diagonal matrices, and $\hat c_1$ is the $T(n)$-equivariant first Chern class of the dual of the tautological subbundle over $\gr_k(\C^n)$. The fixed points of the torus action are the coordinate subspaces $V_I:=\langle e_{I_1},\dots,e_{I_k}\rangle$, where $I=(I_1<\cdots<I_k)$ is a $k$-element subset of $\{1,2,\dots,n\}$. We use the notation $I\in \binom nk$.
We apply the ABBV integration formula:

\begin{equation}\label{eq:subst-grass-int}
 \int\limits_{\gr_k(\C^n)}\!\hat c_1^{k(n-k)}=\sum_{J\in \binom{n}{k}}\frac{x_J^{k(n-k)}}{\prod\limits_{i\in J}\prod\limits_{j\notin J} (x_i-x_j)},
\end{equation}
where $H^*_{T(n)}=\C[x_1,\dots,x_n]$ for the $x_i$'s being the \idez{positive} generators and $x_J = \sum_{i \in J}x_i$.

We use again that if the degree of the equivariant cohomology class $\omega$ is smaller than $k(n-k)$---the dimension of  $\gr_k(\C^n)$---then  $ \int\limits_{\gr_k(\C^n)}\!\omega= 0$. Consequently for any set of elements $M$ with $|M|= k(n-k)$ in any field the following identity holds:

\begin{equation} \label{eq:azonossag}
\big(k(n-k)\big)!\cdot\prod_{i=1}^k\frac{(i-1)!}{(n-i)!} =  \sum_{J\in \binom{n}{k}}\frac{\prod\limits_{m\in M}(x_J - m)}{\prod\limits_{i\in J}\prod\limits_{j\notin J} (x_i-x_j)}.
\end{equation}

Assume first that $ p > (n-k)k $, and $ |\bigwedge^kA| \leq (n-k)k $, and choose an $M\subset\F_{p}$ such that  $|M| = (n-k)k $ and  $\bigwedge^kA \subset M $.

If we use the set $M$ in the formula above and substitute the elements of $A$ in the variables $ x_i$, then the right hand side is divisible by $p$, because the numbers $x_J$ are all in $\bigwedge^kA \subset M $.

On the other hand by (\ref{eq:deg-grass-bh}) the left hand side is  not divisible by $p$, if $ p > (n-k)k $.
\begin{remark}
Using an idea we learned from Gyula K\'arolyi we can calculate the degree of the Grasmannian using the localization formula (\ref{eq:azonossag}). Applying the substitution $x_i=i$ and choosing
  \[  M=\left\{\binom{k+1}2+1,\binom{k+1}2+2,\dots,\binom{k+1}2+k(n-k)\right\},  \]
   we can see that all terms in the sum become zero except the one corresponding to $J=\{1,\dots,k\}$, and that term evaluates to
   \[ \frac{\big(k(n-k)\big)!}{\prod\limits_{i=1}^k  \  \prod\limits_{j=k+1}^n (j-i)},\]
   which is easily seen to be equal to the expression for the degree given above.
\end{remark}

Now assume that $ p  \leq (n-k)k $. Let $ B \subset \GL(n)$ be the Borel subgroup consisting of the invertible upper triangular matrices. The Schubert variety $\sigma_I$ is the closure of the Borel orbit $BV_I$ of the coordinate subspace $V_I$. The Schubert variety $\sigma_I$ has dimension $ \sum_ {i=1}^kI_i - \frac{k(k+1)}{2}$. Note that there are other conventions to encode Schubert varieties e.g with partitions $\lambda$ such that its Young diagram fits into a $k\times (n-k)$ rectangle. The translation is given by the \emph{conversion formula}
\begin{equation}\label{conversion}
  I_j=n-k+j-\lambda_j.
\end{equation}

We  choose now a subset $I$ such that the  Schubert variety $\sigma_I$ has dimension $p-1$.

By the  classical formula of Schubert (see also \cite[Ex. 14.7.11]{fulton}) the degree of $\sigma_I$ via the Pl\" ucker embedding is
\begin{theorem}[Schubert]  \label{deg-of-schubert}

   \[\deg(\sigma_I)=\int\limits_{\gr_k(\C^n)} \! c_1^{p-1} \cdot [\sigma_I] =\frac{(p-1)!}{(I_1-1)! \cdots (I_k-1)!} \cdot \prod_{i<j}(I_j -I_i). \]
\end{theorem}

On the other hand we can calculate the degree of the Schubert variety by the ABBV integration formula.
The Schubert variety is  $T$-invariant, so it has  a $T$-equivariant cohomology class $[\sigma_I]_T$  in the Grassmannian manifold. By the same argument as before the integral of an equivariant lift of the integrand will give the same result, so we get the following

\begin{equation}\label{eq:schubertfok}
 \deg(\sigma_I)=\int\limits_{\gr_k(\C^n)} \! \hat c_1^{p-1} \cdot [\sigma_I] =\sum\limits_{J\in \binom{n}{k}}\frac{x_J^{p-1} \cdot [\sigma_I]_T|_J }{\prod\limits_{i\in J}  \ \prod\limits_{j\notin J} (x_i-x_j)},
\end{equation}

where $ [\sigma_I]_T|_J$ is the restriction of the equivariant cohomology class $[\sigma_I]_T$ to the fixed point corresponding to $J$.

We use again that the integral of a class whose degree is less, than the dimension of the Grassmanian, is zero.
This means, that for any set of elements $M$ with $|M|= p-1$ in any field the following identity holds:

\begin{equation} \label{eq:azonossag2}
 \frac{(p-1)!}{(I_1-1)! \cdots (I_k-1)!} \cdot \prod_{i<j}(I_j -I_i) = \sum\limits_{J\in \binom{n}{k}}\frac{\prod\limits_{m\in M}(x_J - m) \cdot [\sigma_I]_T|_J }{\prod\limits_{i\in J}  \   \prod\limits_{j\notin J} (x_i-x_j)} .
\end{equation}

Assuming that $ |A_{k}| \leq p-1 $ we can choose $M$, such that $A_{k} \subset M \subset F_{p}$ and $|M| = p-1$.

If we use this set $M$ in the formula above and substitute the elements of $A$ in the variables $ x_i$, then we get that the right hand side is divisible by $p$, because the numbers $x_I$ are all in $A_{k} \subset M $.

On the other hand, since $I_k\leq n<(n-k)k$, the left hand side is  not divisible by $p$, which gives the desired contradiction, and we finished the proof.
\end{proof}

\begin{remark} The degrees of the Schubert varieties tend to show up in additive combinatorics. A key step in the proof of the Alon--Nathanson--Ruzsa theorem \cite[Prop 1.2]{anr} is identifying these numbers as certain coefficients of a polynomial. The argument above gives a geometric reason for the appearance of these degrees.
\end{remark}

\section{The Sun conjecture}\label{sec:sun}

Z. W. Sun made the following conjecture in \cite{sun=conj} which can be viewed as a linear extension of the Erd\H os--Heilbronn conjecture.

\begin{conjecture}\label{sunconj}
Let $A$ be a  subset of cardinality $n$ of a field $\F$ with characteristic $ p(\F)$ and let $u_1,\dots, u_k \in \F \setminus 0$. If $p(\F) \neq k+1 $ then the following holds:
\begin{equation}
\left|\left\{\sum_{i=1}^k\!u_i a_i \ : \ a_1, \dots, a_k \in A , \text{ and }a_i \neq a_j \text{ if }i\neq j \right\}\right| \geq \min\{p(\F)-\delta, k(n-k)\}
\end{equation}

In the above equation $\delta = 1$, if $n=2$ and $ u_1+ u_2 = 0$, and $\delta = 0$ otherwise.
\end{conjecture}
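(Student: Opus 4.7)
The plan is to adapt the Dias da Silva--Hamidoune argument of Section~\ref{sec:eh}, replacing the Grassmannian by the partial flag variety whose parabolic structure matches the multiplicity pattern of the coefficients. After reindexing, group the $a_i$'s into $r$ equal-value blocks of sizes $m_1,\dots,m_r$ (with $m_1+\cdots+m_r=n$) taking distinct values $\alpha_1,\dots,\alpha_r\in\F\setminus\{0\}$, and set $N=|A|$. Working in characteristic $p$, pick positive integer lifts $\tilde\alpha_j$ of $\alpha_j$. Take $X$ to be the partial flag variety of chains $\mathcal{V}_1\subset\cdots\subset\mathcal{V}_r\subset\C^N$ with $\dim\mathcal{V}_j=m_1+\cdots+m_j$. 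Its $T(N)$-fixed points are the ordered $r$-tuples $(S_1,\dots,S_r)$ of pairwise disjoint subsets of $\{1,\dots,N\}$ with $|S_j|=m_j$, and the expression $\sum_j\tilde\alpha_j\sum_{i\in S_j}x_i$ ranging over these tuples realizes exactly the sums whose cardinality is to be estimated.

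Consider the equivariant line bundle $L=\bigotimes_{j=1}^{r}\bigl(\det(\mathcal{V}_j/\mathcal{V}_{j-1})^{\vee}\bigr)^{\otimes\tilde\alpha_j}$, so that $\hat c_1(L)$ restricts at $(S_1,\dots,S_r)$ to $\sum_j\tilde\alpha_j\sum_{i\in S_j}x_i$. The dimension is $\dim X=n(N-n)+\sum_{1\leq a<b\leq r}m_am_b$, and the Borel--Hirzebruch (equivalently Weyl dimension) formula yields
\[
D:=\int_X\hat c_1(L)^{\dim X}=(\dim X)!\!\!\!\!\prod_{\substack{1\leq i<j\leq N\\ \lambda_i\neq\lambda_j}}\!\!\!\frac{\lambda_i-\lambda_j}{j-i},
\]
where $\lambda\in\Z^N$ lists each $\tilde\alpha_j$ with multiplicity $m_j$ followed by $N-n$ zeros. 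If $p>\dim X$, each factor is invertible mod $p$: the denominators $j-i\in\{1,\dots,N-1\}$ because $\dim X\geq N-1$; the numerators because the elements of $\{0,\alpha_1,\dots,\alpha_r\}\subset\F_p$ are pairwise distinct; and $(\dim X)!$ because $\dim X<p$. Hence $D\not\equiv 0\pmod p$.

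Apply the ABBV formula to $\hat c_1(L)^{\dim X}$, and as in Section~\ref{sec:eh} replace this class by $\prod_{m\in M}(\hat c_1(L)-m)$ for any $M\subseteq\F_p$ with $|M|=\dim X$; the difference has degree $<\dim X$ and integrates to zero, so the identity
\[
D=\sum_{(S_1,\dots,S_r)}\frac{\prod_{m\in M}\bigl(\sum_j\tilde\alpha_j\sum_{i\in S_j}x_i-m\bigr)}{\prod(\text{tangent weights at }(S_1,\dots,S_r))}
\]
holds. Supposing for contradiction that the target set $B$ satisfies $|B|\leq\dim X$, choose $M\supseteq B$ of size $\dim X$ and substitute $x_i$ equal to integer lifts of the elements of $A$. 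Each tangent weight restricts to a difference of distinct elements of $A$, hence is nonzero mod $p$, while each numerator contains a vanishing factor indexed by the image of its fixed point in $B\subseteq M$. The right-hand side thus vanishes mod $p$, contradicting $D\not\equiv 0\pmod p$ and forcing $|B|\geq\dim X+1=n(|A|-n)+\sum_{1\leq a<b\leq r}m_am_b+1$, which is strictly stronger than Sun's bound.

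The main obstacle is the nonvanishing-mod-$p$ analysis of $D$ beyond the Grassmannian case: formula~(\ref{eq:deg-grass-bh}) must be upgraded to the full Weyl/Borel--Hirzebruch identity, and each resulting factor must be checked to survive reduction modulo $p$ under the single hypothesis ``$p$ large enough.'' The by-product of the method is the strengthening by the nonnegative summand $\sum_{1\leq a<b\leq r}m_am_b$, which is strictly positive whenever at least two $a_i$'s are distinct, accounting for the ``stronger form'' promised in the introduction.
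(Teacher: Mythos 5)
Your proposal retraces the paper's proof of Theorem~\ref{sunstrong} essentially line for line: the same partial flag manifold encoding the multiplicity pattern $m_j=n_j$ of the coefficients, the same canonical ample line bundle (your $\bigotimes_j\bigl(\det(\mathcal V_j/\mathcal V_{j-1})^\vee\bigr)^{\otimes\tilde\alpha_j}$ \emph{is} the restriction of $\mathcal O(1)$ from $\P(\Gamma_\lambda)$), the degree of $\fl\subset\P(\Gamma_\lambda)$ computed once via Borel--Hirzebruch and once via ABBV, the substitution of $\prod_{m\in M}(\hat c_1-m)$ for $\hat c_1^{\dim X}$, and the strengthened constant $d=n(|A|-n)+\sum_{a<b}m_am_b$. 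So the approach and the output coincide with Theorem~\ref{sunstrong}.

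There is, however, one genuine gap and one unjustified step. The gap: you work with ``positive integer lifts'' $\tilde\alpha_j$ of the $\alpha_j$, but the statement allows $\F$ to be any field of characteristic $p$, not just $\F_p$; when $\F\neq\F_p$ the $\alpha_j$ need not lie in the prime field and have no integer lift at all. The paper handles this via its flagged ``key observation'': after the BH = ABBV identity is verified for all dominant integer weights, both sides of \eqref{eq:subst-flag-int-id} (after clearing the integer denominators $\prod(j-i)$) are polynomials in the $\mu_i$, $x_i$, $b_q$ over $\Z$, so the identity holds formally and thus after arbitrary substitutions from $\F$. You should make that reduction explicit rather than lifting to $\Z$. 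The unjustified step: you assert $\dim X\geq N-1$ to control the denominators $j-i$, but this fails, for example, when $|A|=n$ and all $a_i$ coincide (then $\dim X=0$). In exactly those cases the claim is vacuous (the sum set is a singleton and $d=0$), so the argument is not damaged, but the inequality needs that caveat rather than being stated as a general fact.
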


Recently in \cite{sun-zhao} Z. W. Sun and L. L. Zhao proved the conjecture if $p(\F) \geq \frac{k(3k-5)}{2}. $

We prove a stronger bound in the case $p(\F)$ is big enough.

Notice that we do not assume that the $u_i$'s are all different. For example if $u_i=1$ for all $i\leq k$ we get back the Erd\H os--Heilbronn conjecture. Let $\{k_1,\dots,k_t\}$ denote the multiplicities of the $u_i$'s. In particular $\sum k_i=k$.

Using the notation
\[d:= k(n-k) + \sum_{1 \leq i<j \leq t}k_i k_j,\]
 we have the following:

\begin{theorem}\label{sunstrong}
Let $A$ be a  subset of cardinality $n$ of a field $\F$ with characteristic $ p(\F)$ and let $u_1, \dots, u_k \in \F \setminus 0$. If $p(\F) > d $ then the following holds:
\begin{equation}
\left|\left\{\sum_{i=1}^k u_i a_i  : a_1, \dots, a_k \in A , \text{ and }a_i \neq a_j \text{ if }i\neq j\right\}\right| \geq d+1.
\end{equation}
\end{theorem}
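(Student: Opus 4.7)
\emph{Setup.} I would adapt the Grassmannian proof of Section~\ref{sec:eh} by replacing $\gr_k(\C^n)$ with a partial flag manifold whose block sizes match the multiplicity structure of $a_1,\ldots,a_n$. Let $\alpha_1,\ldots,\alpha_t\in\F\setminus\{0\}$ be the distinct values appearing in the list, with multiplicities $n_1,\ldots,n_t$; write $N:=|A|$ and $d_i:=n_1+\cdots+n_i$; and consider the partial flag manifold $X:=\mathrm{Fl}(d_1,\ldots,d_t;\C^N)$ with the $T(N)$-action inherited from $\GL(N)$. A direct count gives $\dim_\C X = n(N-n)+\sum_{i<j}n_in_j=d$, and the fixed points are the nested sequences $\mathbf{I}=(I_1\subset\cdots\subset I_t)$ of coordinate subsets of $[N]$ with $|I_i|=d_i$. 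Let $V_0\subset V_1\subset\cdots\subset V_t$ be the tautological filtration and define the equivariant class
\begin{equation*}
\omega \;:=\; \sum_{i=1}^t \alpha_i\,\hat c_1\!\left(\det(V_i/V_{i-1})^*\right) \;\in\; H^2_{T(N)}(X).
\end{equation*}
Its restriction to a fixed point is $\omega|_\mathbf{I}=\sum_i\alpha_i\sum_{k\in I_i\setminus I_{i-1}}x_k$, which, after grouping the $a_j$'s by value, is precisely $\sum_{j=1}^n a_j x_{\sigma(j)}$ for an injection $\sigma:[n]\to[N]$ determined by $\mathbf{I}$.

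\emph{Localization and reduction.} Exactly as in the derivation of~(\ref{eq:azonossag}), Proposition~\ref{ab} together with the vanishing of integrals of equivariant classes of degree below $2d$ yields, for any $M\subset\F$ with $|M|=d$,
\begin{equation*}
\int_X \omega^d \;=\; \sum_{\mathbf{I}}\frac{\prod_{m\in M}(\omega|_\mathbf{I}-m)}{e(T_\mathbf{I}X)}.
\end{equation*}
Assuming for contradiction that the image set has size at most $d$, I would pick $M$ to contain it and substitute the elements of $A$ for $x_1,\ldots,x_N$ via any bijection. Each $\omega|_\mathbf{I}$ then lies in the image set, hence in $M$, so every numerator vanishes and the right-hand side is $0$ in $\F$.

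\emph{Nonvanishing of the degree; main obstacle.} The decisive remaining step is to evaluate the left-hand side and show $\int_X\omega^d\neq0$ in $\F$. Applying the Borel-Hirzebruch formula to $X=\GL(N)/P$, treating the quotient $\C^N/V_t$ as a virtual ``block'' of size $N-n$ with associated weight $\alpha_{t+1}:=0$, I expect
\begin{equation*}
\int_X \omega^d \;=\; \frac{d!\cdot\prod_{1\leq i<j\leq t}(\alpha_i-\alpha_j)^{n_in_j}\cdot\prod_{i=1}^t \alpha_i^{n_i(N-n)}}{\prod_{\substack{k<l\\k,l\text{ in different blocks}}}(l-k)},
\end{equation*}
where the blocks are the consecutive intervals in $[N]$ of sizes $n_1,\ldots,n_t,N-n$; this specializes to~(\ref{eq:deg-grass-bh}) when $t=1$ and $\alpha_1=1$. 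Under $p(\F)>d$ every factor is nonzero in $\F$: $d!$ because $p(\F)>d$; each $\alpha_i-\alpha_j$ because the $\alpha_i$ are pairwise distinct; each $\alpha_i$ by hypothesis; and the integer denominator because it is a product of positive integers at most $N-1\leq d$. The main technical obstacle is the precise verification of this explicit Borel-Hirzebruch formula with the correct book-keeping for the ``phantom'' block coming from $\C^N/V_t$; once that is settled, the polynomial-substitution endgame is identical to that of Section~\ref{sec:eh}.
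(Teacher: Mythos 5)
Your proposal is essentially the paper's proof, with the same choice of geometric object and the same two-sided degree computation. The paper likewise takes the partial flag manifold $\mathrm{Fl}(d_1,\dots,d_t;\C^{|A|})$ whose block sizes record the multiplicities $n_1,\dots,n_t$ together with a final block of size $|A|-n$ carrying the zero weight, applies ABBV with the dehomogenization trick to obtain an identity in which the right-hand side vanishes after substitution of $A$ and the forbidden set, and quotes the Borel--Hirzebruch formula $\deg(\fl)=d!\,\prod_{\lambda_i>\lambda_j}\frac{\lambda_i-\lambda_j}{j-i}$ to see that the left-hand side is nonzero modulo $p(\F)$. The ``phantom block'' that worries you is not actually an obstacle: in the paper's formulation (Theorem~\ref{thm:bh} in Appendix~\ref{app:bh}) the zero weight simply occupies the last $|A|-n$ entries of the dominant weight vector $\lambda$, and the product over $T_\lambda=\{\alpha\in R^+:\langle\lambda,\alpha^\vee\rangle\neq 0\}$ automatically runs over exactly the cross-block pairs; your expanded formula is precisely the unravelling of the paper's product, and your divisibility check of the denominator (each factor $\leq |A|-1\leq d < p(\F)$) is correct.

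One piece of bookkeeping you elide, which the paper flags as ``the key observation'': you cannot literally form $\omega:=\sum_i\alpha_i\,\hat c_1(\det(V_i/V_{i-1})^*)$ with $\alpha_i\in\F$ as scalar coefficients, since $H^*_{T(N)}(X)$ is a ring over $\Z$ (or $\C$), not over $\F$. The correct move, made explicit in the paper, is to first fix strictly decreasing integers $\mu_1>\cdots>\mu_t>\mu_{t+1}=0$ so the cohomological computation is legitimate, observe that the resulting ABBV/Borel--Hirzebruch identity is a polynomial identity simultaneously in the $\mu_i$'s, the $x_j$'s, and the elements of $M$, and only then reduce modulo $p(\F)$ and substitute $\alpha_i\mapsto\mu_i$, $A\mapsto\{x_j\}$, $M\mapsto\{b_q\}$. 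With that step spelled out, your argument is the paper's.
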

\begin{proof}

We again create a polynomial identity using the ABBV formula to prove the theorem.

Consider now the $\GL(n)$-representation $\Gamma_\lambda$ with highest weight
  \[  \lambda=(\mu_{1},\dots, \mu_{1}, \dots, \mu_{t+1}, \dots, \mu_{t+1}), \]
where the integer $\mu_i$ occurs $k_i$ many  times if $i \leq t$
  and $\mu_{t+1}:=0$ occurs $n-k$  times. We also assume that $\mu_1 > \cdots > \mu_{t+1} $.
The minimal orbit of $\P(\Gamma_\lambda)$ is the partial flag manifold $ \fl $, consisting of the flags $ 0 \subset V_1 \subset \cdots  \subset V_t \subset \C^{n} $, where
$ \dim (V_i) = \sum_{j=1}^i k_j$. Notice that $ \dim(V_t) = k$.
The partial flag manifold $ \fl $ is a complex manifold with dimension $d$ and the group $\GL(n)$ acts on it smoothly and transitively.
Let $L$ be the restriction of the dual of the tautological bundle over $\P (\Gamma_\lambda)$ to the flag manifold.
Let us denote the $T(n)$-equivariant first Chern class of $L$ by $\hat c_1$.
The fixed points of the torus action on the flag manifold are exactly the coordinate flags.
If we have a fixed point $F = 0 \subset V_1 \subset \cdots  \subset V_t \subset \C^{n} $, and a permutation $\pi \in S_n$, such that
$V_i = \langle e_{\pi(1)}, \dots,e_{ \pi(\dim(V_i))} \rangle $, then the restriction of $\hat c_1$ to $F$ is
 \[ \hat c_1|_F= \sum\limits_{i=1}^{n}\lambda_{i} x_{\pi(i)} ,\]
where $H^*_{T(n)}=\Z[x_1,\dots,x_n]$ and the $x_i$'s are the \quot{positive} generators.
For a subset $I  \subset \{1, 2, \dots, n\} $ we use the notation $ x_I = \sum_{i \in I}x_i$.

Now if we use the ABBV integration formula we get the following:

\begin{equation}\label{eq:flag-int}
 \deg(\fl) = \int\limits_{\fl}\hat c_1^{d}=\sum_{I_1,\dots, I_{t+1}}\frac{\left(\sum\limits_{j=1}^{t+1} \mu_jx_{I_j}  \right)^d}{\prod\limits_{1\leq r<s \leq t+1}\hspace{2mm}\prod\limits_{j\in I_s} \prod\limits_{i\in I_r}(x_i-x_j)},
\end{equation}
where the summation goes along all partitions of the numbers $1, 2, \dots, n$ into subsets $ I_1,  \dots, I_{t+1} $, where $|I_j| = k_j$.

With the same argument as in the previous cases we can include a new set of variables $b_1,\dots, b_d $:

\begin{equation}\label{eq:subst-flag-int}
 \deg(\fl) = \sum_{I_1,\dots, I_{t+1}}\frac{\prod\limits_{q=1}^{d}\left(\sum\limits_{j=1}^{t+1} \mu_jx_{I_j}  - b_q\right)}{\prod\limits_{1\leq r<s \leq t+1}\hspace{2mm}\prod\limits_{j\in I_s} \prod\limits_{i\in I_r}(x_i-x_j)}
\end{equation}

On the other hand by the Borel--Hirzebruch formula \cite[24.10 Thm]{borel-hirzebruch2} one can easily get that
\begin{equation}\label{bh4flag} \deg(\fl) = d! \cdot \prod_{\lambda_i > \lambda_j } \frac{\lambda_i - \lambda_j}{j-i}. \end{equation}
Gross and Wallach gives a modern introduction to the Borel--Hirzebruch formula in \cite{gross-wallach}, what we found very useful.

We can rewrite \eqref{bh4flag} in terms of the $\mu$'s:
\begin{equation}\label{bh4flaginmu}
\deg(\fl) = d! \cdot \prod\limits_{1\leq a<b \leq t+1 }
       \frac {(\mu_a - \mu_b)^{k_ak_b} }   {\prod\limits_{u=1}^{k_a}\prod\limits_{v=1}^{k_b}(v-u+\sum\limits_{i=a}^{b-1}k_i)}.
           \end{equation}

This means that we have the following polynomial identity in the variables $\mu_i, x_i, b_i $:

\begin{equation}\label{eq:subst-flag-int-id}
 d! \cdot \prod\limits_{1\leq a<b \leq t+1 } \frac{(\mu_a - \mu_b)^{k_ak_b}} {\prod\limits_{u=1}^{k_a}\prod\limits_{v=1}^{k_b}(v-u+\sum\limits_{i=a}^{b-1}k_i)} =
 \sum_{I_1,\dots, I_{t+1}}\frac{\prod\limits_{q=1}^{d}\left(\sum\limits_{j=1}^{t+1} \mu_jx_{I_j}  - b_q\right)}{\prod\limits_{1\leq s<l \leq t+1}\hspace{2mm}\prod\limits_{j\in I_s} \prod\limits_{i\in I_r}(x_i-x_j)}.
\end{equation}

The key observation here is that we can replace the $\mu_i$'s with variables.

Now suppose that our conjecture is false, $p(\F) > d$ and let $|B|=d $ be a subset of $\F$ containing the set
  \[  \left\{\sum_{1\leq i \leq k}\! u_i a_i \ : \ a_1, \dots, a_k \in A , \text{ and }a_i \neq a_j \text{ if }i\neq j\right\}.  \]

Now let us  substitute the elements of $B$ into the variables $b_i$, the elements of $A$ into the variables $x_i$. Into the variables $\mu_i$ we substitute the $u_i$ which had multiplicity $k_i$.

So now from the assumptions we  get that the right hand side is zero, while the left hand side is non zero.
This contradiction proves our theorem.
\end{proof}

Finally we study small primes in the special case $k=n$ and all the $u_i$ are different. Notice that in this case the estimate of the conjecture is trivial, but we show that in fact all elements of the field can be obtained:

\begin{theorem}\label{sun2}
Let $u_1, \dots, u_ n \in \F $ be pairwise distinct elements of a field $\F$ with characteristic $ p$ and let $A=\{a_1,\dots,a_n\}$ be an other   subset of cardinality $n$ of $\F$.
If $n>3$ and $p  \leq \binom{n}2 $ then the following holds:
\begin{equation}
\left|\left\{\sum_{i=1}^{n}  u_i a_{\pi(i)}  : \pi \in S_n  \right\}\right| = p
\end{equation}
\end{theorem}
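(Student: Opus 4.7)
Since $|A|=n$ and the $a_i$ are pairwise distinct, every multiplicity $n_i$ from the setup of Theorem~\ref{sunstrong} equals $1$, and the partial flag manifold there specializes to the complete flag manifold $\fl=\fl(n)$ of complex dimension $\binom{n}{2}$. The line bundle $L_a$ with weights $(a_1,\dots,a_n)$ has $T(n)$-equivariant first Chern class restricting to $\hat c_1|_{F_\pi}=\sum_{i=1}^n a_i x_{\pi(i)}$ at the torus fixed point $F_\pi$ indexed by a permutation $\pi\in S_n$.

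The plan is to mimic the small-prime branch of the proof of Theorem~\ref{thm:eh} (the case $p\leq k(n-k)$), replacing the Grassmannian there by $\fl(n)$. I argue by contradiction: suppose $|S|\leq p-1$ where $S=\{\sum_i a_i x_{\pi(i)}:\pi\in S_n\}$, and pick $M\subset\F$ with $S\subset M$ and $|M|=p-1$. Since $p-1<\binom{n}{2}=\dim\fl$, there is a Schubert variety $\sigma_w\subset\fl$ with $\ell(w)=p-1$. The ABBV formula applied to $\prod_{m\in M}(\hat c_1-m)\cdot[\sigma_w]_T$, together with the vanishing of strictly lower-degree contributions under integration, gives
\begin{equation*}
\deg_{L_a}(\sigma_w) = \sum_{\pi\in S_n}\frac{\prod_{m\in M}\bigl(\sum_i a_i x_{\pi(i)}-m\bigr)\cdot[\sigma_w]_T|_{F_\pi}}{\prod_{i<j}(x_{\pi(i)}-x_{\pi(j)})}.
\end{equation*}
Specializing the $x_i$'s to the elements of $A$ makes every numerator on the right vanish (since $\sum_i a_i x_{\pi(i)}\in S\subset M$), forcing $\deg_{L_a}(\sigma_w)\equiv 0\pmod p$.

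The contradiction then comes from exhibiting $w$ of length $p-1$ for which $\deg_{L_a}(\sigma_w)$ is a unit modulo $p$. In the easy range $p-1\leq\lfloor n/2\rfloor$, I take $w=s_{i_1}\cdots s_{i_{p-1}}$ a product of pairwise commuting simple reflections; then $\sigma_w\cong(\P^1)^{p-1}$ and $\deg_{L_a}(\sigma_w)=(p-1)!\prod_k(a_{i_k}-a_{i_k+1})$, which is a unit mod $p$ since $p-1<p$ and the $a_i$ are distinct. For larger lengths, $\deg_{L_a}(\sigma_w)$ is a degree-$(p-1)$ polynomial in the $a_i$'s; when $\sigma_w$ is smooth one has the leading-coefficient formula
\[
\deg_{L_a}(\sigma_w)=(p-1)!\prod_{(i,j)\in\mathrm{Inv}(w)}\frac{a_i-a_j}{j-i},
\]
reducing the task to finding $w$ of length $p-1$ whose inversions $(i,j)$ all satisfy $j-i<p$, keeping both the factorial and the Weyl denominator coprime to $p$.

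The main obstacle is this final mod-$p$ non-vanishing. When $p\leq n$ one can take $w$ in the Young subgroup $S_p\subset S_n$, so all its inversions automatically satisfy $j-i\leq p-1<p$; when $p>n$, every inversion satisfies $j-i\leq n-1<p$ for free. The residual difficulty is in handling singular $\sigma_w$, where the displayed product formula must be replaced by the degree of a Bott--Samelson resolution (a sum over reduced expressions for $w$); the hypothesis $n>3$ should provide enough flexibility, e.g.\ via the Lakshmibai--Sandhya pattern-avoidance criterion, to choose a smooth $\sigma_w$ of length $p-1$ for every prime $p\leq\binom{n}{2}$.
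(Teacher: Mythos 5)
Your strategy of importing a Schubert variety $\sigma_w\subset\fl(n)$ of dimension $p-1$ and showing its degree is a unit mod $p$ is in the spirit of the small--$p$ branch of the paper's proof of Theorem \ref{thm:eh}, but the analogy breaks down at the crucial step. The ``leading-coefficient formula'' you invoke for smooth Schubert varieties in the flag manifold,
\[
\deg_{L_a}(\sigma_w)\;=\;\ell(w)!\prod_{(i,j)\in\mathrm{Inv}(w)}\frac{a_i-a_j}{\,j-i\,},
\]
is false. Already in $\fl(3)$ take $w=s_1s_2=231$, which is smooth. The Postnikov--Stanley chain formula (Proposition \ref{shubfok}) gives, in the $v$-variables $\lambda_1=v_1+v_2$, $\lambda_2=v_2$, $\lambda_3=0$, the two Bruhat chains $e\lessdot s_1\lessdot w$ and $e\lessdot s_2\lessdot w$ with contributions $v_1(v_1+v_2)$ and $v_2\cdot v_1$, so $R_w=v_1^2+2v_1v_2=(\lambda_1-\lambda_2)(\lambda_1+\lambda_2)$. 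Your formula predicts $\big((\lambda_1-\lambda_3)(\lambda_2-\lambda_3)\big)\big/(2\cdot 1)\cdot 2!=\lambda_1\lambda_2$. These are different polynomials, so there is no hope of pushing your argument through smoothness criteria such as Lakshmibai--Sandhya: the degree of a flag-manifold Schubert variety is genuinely a \emph{sum} over Bruhat chains, and unlike the Grassmannian case (Theorem \ref{deg-of-schubert}) it does not collapse to a single product over inversions. Since your proof of non-vanishing mod $p$ rests entirely on this product formula, the proof has a gap even before you reach the singular case you flag as the ``residual difficulty.''

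The paper sidesteps the need to control $\deg_{L_a}(\sigma_w)$ for arbitrary $w$. It starts from the full flag-manifold degree identity
\[
V(\lambda)\cdot\frac{\binom n2!}{V(1,\dots,n)}=\sum_{\pi\in S_n}\frac{\big(\sum_i\lambda_ix_{\pi(i)}\big)^{\binom n2}}{V(x_{\pi(1)},\dots,x_{\pi(n)})},
\]
reduces by induction to the range $\binom{n-1}2<p$ (which also forces $n<p$), sets $k=\binom n2-p+1\leq n-1$, and applies $\lambda_i^k\,\partial_{\lambda_i}^k$ to both sides, summing over $i$. The alternating identity $\sum_i\lambda_i^k\partial_{\lambda_i}^k V(\lambda)=\binom n{k+1}k!\,V(\lambda)$ turns the left side into a clean, manifestly unit-mod-$p$ multiple of $V(\lambda)$ (all factors $k!$, $(p-1)!$, $\binom n{k+1}$, $V(1,\dots,n)$ are coprime to $p$ once $n<p$ and $k\leq n-1$), while the right side has integrand degree exactly $p-1$, so the dehomogenization trick with a set $B$ of size $p-1$ applies. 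This differentiation device achieves precisely the degree reduction you are after, without ever identifying a specific Schubert class, and its non-vanishing is immediate from $V(a)\neq 0$ for distinct $a_i$. If you want to salvage your approach you would need either a correct expression for $\deg_{L_a}(\sigma_w)$ whose non-vanishing mod $p$ you can certify for some $w$ of every needed length, or a cohomological interpretation of the paper's differentiated identity---but as written the Schubert-degree step does not work.
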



 \begin{proof}
Specializing  (\ref{eq:subst-flag-int-id}) we get the identity:

\begin{equation}
V(\lambda_1, \dots, \lambda_n) \cdot \frac{\binom{n}2!}{V(1, 2, \dots, n)}=\sum_{\pi \in S_n}   \frac{\left(\sum\limits_{i=1}^{n}\lambda_i x_{\pi(i)}\right)^{\binom{n}2}}{V(x_{\pi(1)} , \dots , x_{\pi(n)})} ,
\end{equation}
where the $\lambda_i$'s and the $x_i$'s are variables.  Here $V(x_{1} , \dots , x_{n})$ denotes the Vandermonde determinant $\prod\limits_{i<j}(x_j-x_i)$.

Formally you can think of the left hand side as  the degree of the embedding of the full flag manifold to $\P(\Gamma_\lambda)$ for the highest weight $\lambda=(\lambda_1, \dots, \lambda_n)$ with $\lambda_i>\lambda_j$ for $i<j$.

We can assume that $p > \binom{n-1}2$, otherwise we know that $ n \leq p \leq \binom{n-1}2 $, so $n-1 > 3$.
In that case we can fix $\pi(n) = n$ and prove the statement for $n-1$ rather than $n$.

Notice that if $n \geq 5$ then $n < \binom{n-1}2 < p$, so $n < p$, and if $n = 4$, then again $ p \neq 4$, so we have $n < p$.

So assume that $p > \binom{n-1}2$ and let $k = \binom n2 - p + 1$, where we have $k \leq n-1$.
Now  differentiate the identity $k$ times in the variable $\lambda_i$:

\begin{equation}
\sum_{\pi \in S_n}  \frac{x_{\pi(i)}^k \cdot \left(\sum\limits_{i=1}^{n}\lambda_i x_{\pi(i)}\right)^{p-1}}{V(x_{\pi(1)} , \dots , x_{\pi(n)})} =  \frac{\partial^k}{\partial \lambda_i^k}  V(\lambda_1, \dots, \lambda_n) \cdot \frac{(p-1)!}{V(1, 2, \dots, n)}.
\end{equation}

From these identities we get:

\begin{equation}
\sum_{\pi \in S_n} \frac{\left(\sum\limits_{i=1}^{n}(\lambda_i x_{\pi(i)})^k\right) \cdot \left(\sum\limits_{i=1}^{n}\lambda_i x_{\pi(i)}\right)^{p-1}}{V(x_{\pi(1)} , \dots , x_{\pi(n)})}  =
\sum_{i=1}^{n}  \left( \lambda_i^k  \cdot \frac{\partial^k}{\partial \lambda_i^k} V(\lambda_1, \dots, \lambda_n) \cdot \frac{(p-1)!}{V(1, 2, \dots, n)} \right) .
\end{equation}

Now we have the following easy identity for $k \leq n-1$ (use that the left hand side is also alternating):

\begin{equation}
\sum_{i=1}^{n}  \left( \lambda_i^k  \cdot \frac{\partial^k}{\partial \lambda_i^k} V(\lambda_1, \dots,\lambda_n) \right) = V(\lambda_1, \dots,\lambda_n) \cdot \binom n{k+1}  k!
\end{equation}

implying that

\begin{equation}
\sum_{\pi \in S_n} \frac{\left(\sum\limits_{i=1}^{n}(\lambda_i x_{\pi(i)})^k\right) \cdot \left(\sum\limits_{i=1}^{n}\lambda_i x_{\pi(i)}\right)^{p-1}}{V(x_{\pi(1)} , \dots , x_{\pi(n)})}  = V(\lambda_1, \dots, \lambda_n) \cdot \binom n{k+1}  k! \cdot \frac{(p-1)!}{V(1, 2, \dots, n)}
\end{equation}

Again we can include a set $B$ of constants with $|B| = p-1$ without changing the value of the formula:

\begin{equation} \label{eq:sun-different}
\sum_{\pi \in S_n} \frac{\left(\sum\limits_{i=1}^{n}(\lambda_i x_{\pi(i)})^k\right) \cdot \prod\limits_{b\in B}\left( \sum\limits_{i=1}^{n}
\lambda_i x_{\pi(i)}-b \right)}{V(x_{\pi(1)} , \dots , x_{\pi(n)})}  = V(\lambda_1, \dots, \lambda_n) \cdot \binom n{k+1}  k! \cdot \frac{(p-1)!}{V(1, 2, \dots, n)}
\end{equation}

Assume that we have a counterexample $A$ and numbers $u_1, \dots, u_ n \in \F $, then we have a set $ B \subset \F$, such that $|B| = p-1$ and
  \[ \left\{\sum\limits_{i=1}^{n}u_i a_{\pi(i)}  : \pi \in S_n \right\} \subset B.  \]
Now in the identity \eqref{eq:sun-different} we substitute the elements $a_i$ into the  variables $x_i$, the elements $u_i$ into the variables $\lambda_i$. We have that the left hand side is $0$ in $\F$, while the right hand side is not, which is a contradiction.
\end{proof}

\section{The Grasshopper: a simultaneous generalization of the Erd\H os--Heilbronn problem} \label{sec:grasshopper}

On the  $50$-th International Mathematics Olympiad for highschool students the following problem was given:

\textbf{Imo 2009/6 \cite{khramtsov}:} \emph{Let $a_1, \dots, a_n$ be distinct positive integers and let $M$ be a set of $n-1$ positive integers not containing $s = \sum_{i=1}^{n}a_i$. A grasshopper is to jump along the real axis, starting at the point $0$ and making $n$ jumps to the right with lengths $a_1, \dots , a_n$ in some order. Prove that the order can be chosen in such a way that the grasshopper never lands on any point in $M$.}
\medskip

The problem was said one of the hardest problems on IMO until that time, since only $3$ people could solve it. The proof is an inductive combinatorial proof and deeply depends on the fact that the numbers $a_1, \dots, a_n$ are positive.

G\'eza K\'os suggested the following form in the Mathematical  and Physical Journal for Secondary Schools \cite{komal}:

\textbf{K\"omal: A. 496.} \emph{Let $a_1,a_2,\dots,a_{k}$ be distinct integers for $k=2n$ and let $M$ be a set of $n$ integers not containing 0 and $s=a_1+a_2+\cdots+a_{2n}$. A grasshopper is to jump along the real axis, starting at the point 0 and making ${k}$ jumps with lengths $a_1,a_2,\dots,a_{2n}$ in some order. If $a_i>0$ then the grasshopper jumps to the right; while if $a_i<0$ then the grasshopper jumps to the left, to the point in the distance $|a_i|$ in the respective steps. Prove that the order can be chosen in such a way that the grasshopper never lands on any point in $M$.}
\medskip

This form was proved by the second author using the Combinatorial Nullstellensatz, however  to prove that a certain coefficient of a polynomial is nonzero was cumbersome.

In \cite{kg=grasshopper}  the positivity of this coefficient is proved by an inductive tricky way.
In this section we will identify this coefficient with an intersection number  in a flag manifold.

The argument  of \cite{kg=grasshopper} in fact gives more: at every step we can give a different forbidden sets of integers $M_i$ for $i=1,\dots,2n-1$ of cardinality $n$, and the grasshopper will be able to avoid the positions in $M_i$ with his $i$-th jump. We would like to generalize this version: characterizing the sequences of non negative integers $\mathbf{b}=(b_1,\dots,b_{k-1})$ for which the grasshopper can avoid forbidden sets of integers $M_i$ for $i=1,\dots,k-1$ of cardinality $b_i$. We will call these sequences $\mathbf{b}$ \emph{admissible}:
\begin{definition}\label{de:admissible}
A sequence of non negative integers $\mathbf{b}=(b_1,\dots,b_{k-1})$ is admissible if for all $M_i\subset\Z$ and $|M_i|=b_i$, and for all $a_1,a_2,\dots,a_{k}$  distinct integers there is a permutation $\pi\in S_k$ such that for all $j=1,\dots k-1$
\[ \sum_{i=1}^{j}a_{\pi(i)}\notin M_i.\]
\end{definition}
Then  \textbf{A. 496.} is equivalent with the admissibility of $\mathbf{b}=(n,\dots,n)$ for $k=2n$.

Notice that we no longer assume that $k$ is even. In this section we characterize the admissible sequences:

\begin{theorem} \label{szocsalt-yes} A sequence $\mathbf{b}$ is admissible if and only if it satisfies the following system of linear inequalities:
for any subset $P \subset \{1, \dots ,k-1 \}$ the  condition
\begin{equation}\label{hall-system}
   \sum\limits_{p \in P}b_p\ \ \leq \ \ \Big|\{(i,j):\exists\, p\in P, \text{ such that } 1\leq i \leq p \leq j\leq k-1\}\Big|
\end{equation}
holds.
\end{theorem}

You can interpret the right hand side as the number of 'intervals' of $\{1, \dots ,k-1 \}$ having nonempty intersection with $P$. This theorem tells us that it is enough to check $2^{k-1}$ simple inequalities to decide whether $\mathbf{b}$ is admissible.
\begin{remark} \label{rem:grasshopper=>eh} An easy calculation shows that for sequences $\mathbf{b}$ with only one non zero $b_i$ Theorem \ref{szocsalt-yes} specializes to the Erd\H os--Heilbronn theorem in 0 characteristics. So in the Erd\H os--Heilbronn problem there is only one forbidden set, as in the Grasshopper problem there are several. This way one can think of the Grasshopper problem as a simultaneous Erd\H os--Heilbronn problem.
\end{remark}

First we create an identity by calculating the degree of the full flag manifold in two different ways similarly to the proof of Theorem \ref{sunstrong}.

Let $\Gamma_\lambda$ be the irreducible representation of $\GL(k)$ with highest weight $\lambda=(\lambda_1>\lambda_2>\cdots>\lambda_{k-1}>\lambda_{k}=0)$. This weight is in the open Weyl chamber, so the minimal orbit in $\P(\Gamma_\lambda)$ is the complete flag variety $\fl=\fl(k)$. Its degree in $\P(\Gamma_\lambda)$ is calculated by the integral
  \[  \deg \big(\fl\subset \P(\Gamma_\lambda)\big)=\int\limits_{\fl}\hat c_1(L_\lambda)^d, \]
where $\hat c_1(L_\lambda)$ is the torus equivariant first Chern class of the canonical line bundle $L_\lambda$ and $d=\binom k2=\dim(\fl)$. Using the ABBV formula to calculate the integral we get

\begin{equation}\label{eq:deg-fl-with-ab}
 \deg \big(\fl\subset \P(\Gamma_\lambda)\big)=\sum_{\pi\in S_k}\frac{\left(\sum\limits_{i=1}^k\lambda_ix_{\pi(i)}\right)^d}{e(T_\pi\fl)},
\end{equation}
where $e(T_\pi\fl)=(-1)^{\binom k2} \sgn(\pi)V(\mathbf{x})$.

We can also use the Borel--Hirzebruch formula, which gives

\begin{equation}\label{eq:deg-fl-with-bh}
 \deg \big(\fl\subset \P(\Gamma_\lambda)\big)=(-1)^{\binom k2}\binom d\Delta V(\lambda),
\end{equation}
where $ \Delta = (0, 1, \dots k-1)$.

Now we arrive to the identity:
\begin{equation}\label{eq:deg-fl-bh=ab}
(-1)^{\binom k2}\binom d\Delta V(\lambda)=\sum_{\pi\in S_k}\frac{\left(\sum\limits_{i=1}^k\lambda_ix_{\pi(i)}\right)^d}{e(T_\pi\fl)}.
\end{equation}
Notice that both sides are polynomials in the variables $\lambda_i$, and the identity is valid for integer substitutions in an open cone, therefore these polynomials agree.

\begin{remark}The actual value $e(T_\pi\fl)=(-1)^{\binom k2} \sgn(\pi)V(\mathbf{x})$ of the denominator is not used in the following, however we can see that \eqref{eq:deg-fl-bh=ab} is equivalent to
   \[\binom d\Delta V(\lambda) V(\mathbf{x})  =   \alt_\mathbf{x}\big((\lambda,\mathbf{x})^d\big),  \]
where $(\lambda,\mathbf{x})$ denotes the scalar product of the vectors $\lambda$ and $\mathbf{x}$ and $\alt_\mathbf{x}$ refers to the antisymmetrization with respect to the $\mathbf{x}$ variables. This formula also can be proved directly, without using geometry.
\end{remark}
We define the homogenous polynomial $P$ in the variables $v_i$ with degree $d$ by substituting $ \lambda_i = \sum\limits_{j=i}^{k}v_{j}$ into \eqref{eq:deg-fl-bh=ab}, notice that the polynomial $P$ does not depend on the variable $v_k$.

The coefficient of $v^\mathbf{b}$ for $\mathbf{b}=(b_1,\dots,b_{k-1})$ is:
\begin{equation}
\coef(P,v^\mathbf{b})= \binom{d}{\mathbf{b}} \sum\limits_{\pi\in S_k }
\frac{ \prod\limits_{i=1}^{k-1}\left(\sum\limits_{j=1}^{i} x_{\pi(j)}\right)^{b_i}}{e(T_{\pi}\fl)},
\end{equation}
where $\binom{d}{\mathbf{b}}$ denotes the multinomial coefficient.

By the usual trick we can add lower order terms:
\begin{equation}
\coef(P,v^\mathbf{b})= \binom{d}{\mathbf{b}} \sum\limits_{\pi\in S_k } \frac{ \prod\limits_{m \in M_i }\left(\sum\limits_{j=1}^{i} x_{\pi(j)}
   -m \right)}{e(T_{\pi}\fl)},
\end{equation}
for given sets of numbers $M_i$ with $|M_i|=b_i$. This implies that if the coefficient $\coef(P,v^\mathbf{b})$ is not zero, then $\mathbf{b}$ is admissible. Notice that the actual form of the denominators is not important, only the property that they are not zero if we substitute different numbers into the $x_i$'s.

So we have the following expression:
\begin{equation} \label{eq:mu}
\frac{\binom d\Delta}{\binom d{\mathbf{b}}}\, \mu(\mathbf{b})  =   \sum\limits_{\pi\in S_k } \frac{ \prod\limits_{m \in M_i }\left(\sum\limits_{j=1}^{i} x_{\pi(j)}
   -m \right)}{e(T_{\pi}\fl)},
\end{equation}
where $\mu(\mathbf{b})=(-1)^{\binom k2}\coef(V(\lambda),v^\mathbf{b})$.

Using a formula of Duan in \cite{duan} for the degree of the flag manifold one can give a closed formula for $\mu(\mathbf{b})=\coef(V(\lambda),v^\mathbf{b})$, however this formula is a sum of terms with different signs, and it is not clear from it which coefficients are zero. So we try to decide which $\mu(\mathbf{b})$'s are non zero without calculating their value.

The key observation is that because of $\lambda_i-\lambda_j=v_{i}+\dots+v_{j-1}$,  the positivity of the coefficient of $v^\mathbf{b}$ in the product is equivalent to the existence of a  matching covering the lower class of the bipartite graph $\mathcal{B}_\mathbf{b}$ defined in \ref{de:bipartite}:

\begin{definition}\label{de:bipartite}
We assign a bipartite graph $\mathcal{B}_\mathbf{b}=(U,D_\mathbf{b},E_\mathbf{b})$ to a sequence of non negative integers $\mathbf{b}=(b_1,\dots,b_{k-1})$. The upper  class  $U$ consists of the the  pairs $(j,l)$ with $1 \leq j \leq l \leq k-1 $ independently of $\mathbf{b}$. The lower class $D_\mathbf{b}$ consists of the pairs $ (i, t) $, where $1 \leq i \leq k-1 $ and $ 1 \leq t \leq b_i$. $E_\mathbf{b}$ is the set of edges: there is an edge between $(j, l)$ and $(i, t)$ if $j \leq i \leq l $.

We say that the sequence $\mathbf{b}$ is a \emph{matching sequence} if $\mathcal{B}_\mathbf{b}$ has a matching covering the lower class $D_\mathbf{b}$.
\end{definition}

The elements of $U$ correspond to the factors of $V(\lambda)$ and the elements of $D_\mathbf{b}$ correspond to the factors of $v^\mathbf{b}$. Note that $\mu(\mathbf{b})>0$ implies that $|\mathbf{b}|:=\sum_i b_i=\binom k2$, so in these cases a matching of $D_\mathbf{b}$ is a perfect matching of the bipartite graph $\mathcal{B}_\mathbf{b}$. We will call these $\mathbf{b}$'s \emph{perfect matching sequences}. However we also interested in sequences with $|\mathbf{b}|<\binom k2$, but these cases can be reduced to the $|\mathbf{b}|=\binom k2$ case (the choice of $a_i:=i$ shows that for any admissible sequence $|\mathbf{b}|\leq\binom k2$ holds):
The structure of these bipartite graphs is quite simple, because the set of neighbours of a vertex $(i,t)\in D_\mathbf{b}$ is independent of $t$. Consequently a simple combinatorial argument gives the following.
\begin{lemma} The sequence $\mathbf{b}$ is matching if and only if there is a matching sequence $\bar{\mathbf{b}}=(\bar b_1,\dots,\bar b_{k-1})$ with $|\bar{\mathbf{b}}|=\binom k2$ dominating $\mathbf{b}$, i.e. $\bar b_i\geq b_i$ for $i=1,\dots,k-1$. \qed
\end{lemma}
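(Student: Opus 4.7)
The backward direction is essentially automatic. The neighbourhood of a lower vertex $(i,t)$ in $\mathcal{B}_\mathbf{b}$ depends only on $i$, so whenever $\bar b_i \ge b_i$ the obvious inclusion $D_\mathbf{b} \hookrightarrow D_{\bar{\mathbf{b}}}$ preserves neighbourhoods. A matching of $\mathcal{B}_{\bar{\mathbf{b}}}$ that saturates all of $D_{\bar{\mathbf{b}}}$ therefore restricts to a matching of $\mathcal{B}_\mathbf{b}$ saturating $D_\mathbf{b}$.

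For the forward direction I would argue constructively. Start with a matching $M$ of $\mathcal{B}_\mathbf{b}$ saturating $D_\mathbf{b}$, and let $U' \subset U$ be the set of upper vertices that $M$ saturates, so $|U \setminus U'| = \binom{k}{2} - |\mathbf{b}|$. For each remaining $(j,l) \in U \setminus U'$ pick any index $i(j,l) \in \{j, j+1, \dots, l\}$, which is possible because $j \le l$. Define
\[
\bar b_i := b_i + \#\{(j,l) \in U \setminus U' : i(j,l) = i\}.
\]
Then $\bar b_i \ge b_i$ pointwise, and summing gives $|\bar{\mathbf{b}}| = |\mathbf{b}| + |U \setminus U'| = \binom{k}{2}$.

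To verify that $\bar{\mathbf{b}}$ is a matching sequence, extend $M$ to $\mathcal{B}_{\bar{\mathbf{b}}}$ by pairing each newly added lower vertex of first coordinate $i$ with one of the $(j,l) \in U \setminus U'$ that chose $i(j,l) = i$ (in any order). The resulting pair is an edge of $\mathcal{B}_{\bar{\mathbf{b}}}$ since $j \le i(j,l) \le l$, and the pairing is injective on its domain, so the extension is a perfect matching of $\mathcal{B}_{\bar{\mathbf{b}}}$; in particular it saturates $D_{\bar{\mathbf{b}}}$.

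The only conceivable obstacle is ensuring that the index $i(j,l)$ exists, but that is immediate from $j \le l$; this matches the fact that the authors dispose of the lemma with a bare \qed. The conceptual content is just that lower vertices sharing a first coordinate are indistinguishable in $\mathcal{B}_\mathbf{b}$, so inflating $b_i$ only adds slack to the matching problem.
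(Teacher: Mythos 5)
Your proof is correct and complete. The paper itself gives no argument for this lemma, disposing of it with a bare \qed, so there is no alternative approach to compare against; your constructive write-up of both directions (restriction of a saturating matching for the backward implication, and the explicit inflation of $\mathbf{b}$ by assigning each unsaturated upper vertex $(j,l)$ an index $i(j,l)\in[j,l]$ for the forward implication) is exactly the straightforward argument the authors evidently had in mind.
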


The definition immediately implies that any sequence of non negative integers dominated by an admissible sequence is also admissible, therefore, we showed so far (by using the standard substitution trick into the right hand side of \eqref{eq:mu}) that if $\mathbf{b}$ is a matching sequence then it is also admissible. To finish one direction of the proof of Theorem \ref{szocsalt-yes} we  need to prove that

  \begin{proposition}\label{matchinginequalities}
A sequence of non negative integers $\mathbf{b} = (b_1, \dots, b_{k-1} )$ is matching if and only if $\mathbf{b}$ satisfies the following system of linear inequalities:
for any subset $P \subset \{1, \dots ,k-1 \}$ the  condition
      \begin{equation}\label{hall-system}   b_P\leq K(P)  \end{equation}
 holds, where $b_P=\sum\limits_{p \in P}b_p$ and $K(P)$ is the number of pairs $(i,j)$ such that $ 1 \leq i \leq j \leq k-1$ and there exists a $p \in P$ for which $i \leq p \leq j$.
  \end{proposition}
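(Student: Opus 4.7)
The plan is to deduce the statement directly from Hall's marriage theorem applied to the bipartite graph $\mathcal{B}_\mathbf{b}$, choosing the lower class $D_\mathbf{b}$ as the side to be covered. By Hall's theorem, a matching covering $D_\mathbf{b}$ exists if and only if $|N(S)| \geq |S|$ for every $S \subset D_\mathbf{b}$, where $N(S)$ is the neighborhood of $S$ in $U$.

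The structural observation that makes this short is that the neighborhood of a vertex $(i,t) \in D_\mathbf{b}$ depends only on $i$: by definition, it equals the set of pairs $(j,l) \in U$ with $j \leq i \leq l$. Writing $P(S) := \{i : (i,t) \in S \text{ for some } t\}$ for the projection of $S$ onto its first coordinate, we therefore have $|N(S)| = K(P(S))$. Assuming the inequalities $b_P \leq K(P)$ for every $P \subset \{1,\dots,k-1\}$, the estimate
\[
|S| \;\leq\; \sum_{p \in P(S)} b_p \;=\; b_{P(S)} \;\leq\; K(P(S)) \;=\; |N(S)|
\]
verifies Hall's condition, so a covering matching exists. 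Conversely, to deduce the inequality for a particular $P$ we test Hall's condition on the \emph{saturated} subset $S_P := \{(i,t) : i \in P,\ 1 \leq t \leq b_i\}$, which satisfies $|S_P| = b_P$ and $P(S_P) = P$, so that $|N(S_P)| = K(P)$; the resulting inequality $b_P \leq K(P)$ is exactly what is required.

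There is essentially no obstacle beyond recognizing that, among all subsets of $D_\mathbf{b}$ with prescribed projection $P$, the saturated subset $S_P$ is the extremal case for Hall's inequality. This observation collapses the infinitely many Hall-type tests to one inequality per each of the $2^{k-1}$ subsets $P \subset \{1,\dots,k-1\}$, yielding the stated characterization.
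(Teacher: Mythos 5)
Your proof is correct and follows essentially the same approach as the paper: both apply Hall's theorem to $\mathcal{B}_\mathbf{b}$ with $D_\mathbf{b}$ as the side to cover and exploit the fact that the neighborhood of $(i,t)$ depends only on $i$ to reduce to checking Hall's condition on the saturated subsets $S_P$, which yields exactly the inequalities $b_P \leq K(P)$.
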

\begin{proof} We use the Hall theorem \cite{hall1935representatives}. Using again the fact that the set of neighbors of a vertex $(i,t)\in D_\mathbf{b}$ is independent of $t$, we can see that it is enough to check the Hall condition only for the subsets $H$ of $D_\mathbf{b}$ which have the property that if $(i,t)\in H$, then all vertices of the form $(i,s)$ are also in $H$. For these subsets the Hall condition gives exactly the inequalities (\ref{hall-system}).
\end{proof}
For the other direction of Theorem \ref{szocsalt-yes} we need the following.
    \begin{proposition} Admissible sequences are matching.  \end{proposition}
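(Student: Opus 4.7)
We prove the contrapositive. Assume $\mathbf{b}$ is not matching; by Proposition~\ref{matchinginequalities}, there is $P = \{p_1 < \cdots < p_r\} \subseteq \{1, \ldots, k-1\}$ with $b_P > K(P)$. Writing $p_0 = 0$, $p_{r+1} = k$, and $g_s := p_s - p_{s-1}$, an elementary count gives
\[ K(P) = \binom{k}{2} - \sum_{s=1}^{r+1}\binom{g_s}{2}, \]
which is the complex dimension of the partial flag manifold $\fl(P) := \fl(p_1, \ldots, p_r; \C^k)$. This dimensional coincidence is the geometric heart of the proof.

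The plan is to repeat on $\fl(P)$ the twin degree calculation performed on $\fl(k)$ in the forward direction of Theorem~\ref{szocsalt-yes}. First I would compute the degree of $\fl(P)$ in $\P(\Gamma_\mu)$, for a dominant weight $\mu$ with stabilizer the parabolic corresponding to $P$, by the Borel-Hirzebruch formula; simultaneously by the ABBV formula as a sum over $T$-fixed coordinate partial flags. After the change of variables $\mu_s = \sum_{t \geq s} v_t$ analogous to the one appearing just before Remark~\ref{rem:c1eibi}, extracting the coefficient of $\prod_s v_s^{b_{p_s}}$ produces an identity whose left hand side is the intersection number
\[ K^P_{\mathbf{b}} = \int_{\fl(P)} \prod_{s=1}^{r} c_1(E_{p_s}^*)^{b_{p_s}}. \]
Since $\sum_s b_{p_s} = b_P > K(P) = \dim_{\C} \fl(P)$, this integral vanishes for dimensional reasons.

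To extract non-admissibility from this vanishing I would insert Combinatorial-Nullstellensatz-type factors $\prod_{m \in M_{p_s}}\bigl(\sum_{j \in I_{p_s}} x_j - m\bigr)$ into the identity and substitute jumps $a_j = j$ for the $x$-variables. By the Dias da Silva-Hamidoune theorem (Theorem~\ref{thm:eh}), with $a_j=j$ the set of positions reachable at step $p \in P$ has exactly $p(k-p)+1$ elements and is an explicit arithmetic block, so Erd\H os-Heilbronn-type forbidden sets $M_{p_s}$ of the prescribed sizes $b_{p_s}$ can be chosen from within this block. For indices $i \notin P$ we set $M_i$ to consist of $b_i$ integers disjoint from all reachable positions, making those constraints vacuous. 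The identity $K^P_\mathbf{b}=0$, specialised this way, then forces a weighted sum over partial coordinate flags to vanish; since the ABBV denominators remain nonzero under $a_j=j$, this is the desired obstruction to a valid path.

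The hardest part is precisely this last step: converting a signed-sum vanishing into a term-by-term vanishing, so that no cancellation can rescue a would-be admissible order. I expect this to follow from a careful combinatorial matching analysis aligned with the Hall-theoretic structure already developed in Proposition~\ref{matchinginequalities}, together with the ampleness of the bundles $E_{p_s}^*$ recorded in Remark~\ref{rem:c1eibi}, which guarantees that every surviving term on the right hand side genuinely corresponds to an unblocked partial flag.
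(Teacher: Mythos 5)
Your observation that $K(P) = \binom{k}{2} - \sum_s\binom{g_s}{2}$ is the complex dimension of the partial flag manifold $\fl(p_1,\dots,p_r;\mathbb{C}^k)$ is correct and is a nice geometric insight that the paper leaves implicit; the resulting dimensional vanishing $K^P_{\mathbf b}=0$ when $b_P>K(P)$ is also fine. But the last step is a genuine gap, not just the ``hardest part'' to be filled in later. Vanishing of the coefficient is a \emph{necessary} consequence of the polynomial method ceasing to help; it is not a \emph{sufficient} condition for the grasshopper to be stuck. Non-admissibility must be certified by exhibiting concrete data $(a_i, M_i)$ admitting no valid order, and the identity $0 = \sum_f(\text{term}_f)$ can perfectly well have a nonzero ``good'' term cancelled by others. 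The tool you invoke to rule this out --- ampleness of the $E^*_{p_s}$ --- controls the sign of the \emph{total} intersection number, not of the individual ABBV fixed-point contributions, whose denominators $e(T_f\fl(P))$ change sign from one fixed point to the next. So your sketch never actually produces a blocking configuration, which is what the statement requires.

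The paper's proof does produce one and bypasses the cohomological machinery entirely. It sets $a_i=i$ and, given $P=\{p_1<\dots<p_m\}$, chooses each $M_{p_s}$ to be an interval of length $b_{p_s}$ sitting exactly at the minimum position the grasshopper could occupy at step $p_s$; avoiding $M_{p_s}$ therefore forces the running sum strictly above it, so each checkpoint raises a running lower bound by at least $\binom{n_s+1}{2} + b_{p_s}$ with $n_s = p_{s+1}-p_s$. Since the final position is forced to equal $\binom{k+1}{2}$, summing these lower bounds gives precisely $b_P\le K(P)$, and the contrapositive follows. This elementary pigeonhole argument is exactly the missing ingredient in your approach: the degree identity on $\fl(P)$ explains \emph{why} the sharp bound is $K(P)=\dim\fl(P)$, but the proof of necessity still has to construct the obstructing instance directly rather than read it off a vanishing integral.
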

\begin{proof}
In the proof we use the notation $\binom{k}2 = [k]_2$ for typographical reasons.
Let $\mathbf{b}=(b_1,\dots,b_{k-1})$ be an admissible sequence. We choose $a_i=i$ for $i=1,\dots,k$. For a given $P=\{p_1,\dots,p_m\}\subset  \{1, \dots ,k-1 \}$  of Proposition \ref{matchinginequalities} we choose the forbidden sets $M_{p_i}$ to be  intervals of integers of length $b_{p_i}$ in such a way that the grasshopper is forced to jump to the right of $M_{p_i}$ in the $p_i$-th second. Admissibility of $\mathbf{b}$ implies the existence of a permutation $\pi\in S_k$ such that \[s_j:=\sum\limits_{i=1}^{j}a_{\pi(i)}= \sum\limits_{i=1}^{j}{\pi(i)} \notin M_j.\]
We call such a $\pi$ an allowed permutation. For $p_1$ we have $s_{p_1}=\sum\limits_{i=1}^{p_1}\pi(i)\geq \sum\limits_{i=1}^{p_1}i=[p_1 +1]_2$. Therefore for the choice
\[M_{p_1}=\left[[p_1 +1]_2, [p_1 +1]_2 +b_{p_1}-1\right] \]
the grasshopper in the $p_1$-th second must be on the right of $M_{p_1}$, i.e.
\[ s_{p_1}=\sum\limits_{i=1}^{p_1}\pi(i)\ \geq \ [p_1 +1]_2+b_{p_1} \]
for any allowed permutation $\pi$.

In the $p_2$-th second the grasshopper must be further to the right by at least $\sum\limits_{j=1}^{p_2-p_1}i= [p_2 - p_1 +1]_2$:
\[ s_{p_2}=\sum\limits_{i=1}^{p_2}\pi(i)\ \geq \ [p_1 +1]_2+b_{p_1}+[p_2 - p_1 +1]_2 . \]
If we choose
\[M_{p_2}=\big[[p_1 +1]_2+b_{p_1}+ [p_2 - p_1 +1]_2,[p_1 +1]_2+b_{p_1}+ [p_2 - p_1 +1]_2+b_{p_2}-1\big], \]
then we assured that
\[ s_{p_2} \geq [p_1 +1]_2+b_{p_1}+ [p_2 - p_1 +1]_2+b_{p_2}.\]
Continuing with the same strategy we choose $M_{p_i}=[x_i,x_i+b_{p_i}-1]$ for
\[x_i=\sum\limits_{j=0}^{i-1}[n_j +1]_2+\sum\limits_{j=1}^{i-1}b_{p_j},  \]
where $n_0=p_1$ and $n_j=p_{j+1}-p_j$ for $j=1,\dots,m$. By induction we can see that
\[ s_{p_m}\geq x_m+b_{p_m}.\]
For the last $k-p_m$ jumps the grasshopper moves again at least $\sum\limits_{i=1}^{k-p_m}i=[k-p_m+1]_2$ to the right:
\[s_k=[k+1]_2 \geq x_m+b_{p_m}+[k-p_m+1]_2.\]
By definition
\[K(P)=[k]_2-[k-p_m]_2-\sum\limits_{j=0}^{m-1}[n_j]_2,\]
and
\[[k]_2-[k-p_m]_2-\sum\limits_{j=0}^{m-1}[n_j]_2=[k+1]_2-[k-p_m+1]_2-\sum\limits_{j=0}^{m-1}[n_j +1]_2,\]
since $(k-p_m)+\sum\limits_{j=0}^{m-1}n_j=k$. This implies that
$b_P\leq K(P)$, if there is a permutation $\pi\in S_k$ for which the grasshopper avoids the forbidden sets, and Proposition \ref{matchinginequalities}. implies that $\mathbf{b}$ is matching.
\end{proof}
And we finished the proof of Theorem \ref{szocsalt-yes}.\qed
\bigskip

For perfect matching sequences the system of inequalities (\ref{hall-system}) can be replaced by a simpler one:
\begin{proposition} \label{hall} A sequence $\mathbf{b}$ with $|\mathbf{b}|=\binom k2$ is a perfect matching, i.e.
the coefficient $\mu(\mathbf{b})=\coef(V(\lambda),v^\mathbf{b})$ is non-zero if and only if
\begin{equation}\label{perfect-system}
\sum_{j= s}^t b_j \geq \binom{t - s +2}2  \text{ for every }1 \leq s \leq t \leq k-1.
\end{equation}
\end{proposition}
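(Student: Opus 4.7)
The plan is to deduce the equivalence from Proposition~\ref{matchinginequalities} by passing to complements inside $\{1,\dots,k-1\}$ and exploiting that $|\mathbf{b}|=\binom{k}{2}$. Given $P\subset\{1,\dots,k-1\}$ with complement $Q=\{1,\dots,k-1\}\setminus P$, I would rewrite the inequality $b_P\leq K(P)$ as $b_Q\geq \binom{k}{2}-K(P)$, using $b_P+b_Q=|\mathbf{b}|=\binom{k}{2}$. The total number of pairs $(i,j)$ with $1\leq i\leq j\leq k-1$ equals $\binom{k}{2}$, and $K(P)$ counts those for which some $p\in P$ lies in $[i,j]$; hence $\binom{k}{2}-K(P)$ counts pairs $(i,j)$ such that the entire interval $[i,j]$ is contained in $Q$.

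Next, if $Q$ decomposes into maximal runs $[s_1,t_1],\dots,[s_r,t_r]$ of lengths $\ell_m=t_m-s_m+1$, then an integer interval $[i,j]\subset Q$ must lie in exactly one of the $[s_m,t_m]$; counting within each run gives
\begin{equation*}
\binom{k}{2}-K(P)\;=\;\sum_{m=1}^r\binom{\ell_m+1}{2}\;=\;\sum_{m=1}^r\binom{t_m-s_m+2}{2}.
\end{equation*}
Proposition~\ref{matchinginequalities} thus becomes equivalent to the statement that
\begin{equation*}
\sum_{j\in Q}b_j\;\geq\;\sum_{m=1}^r\binom{t_m-s_m+2}{2}
\end{equation*}
holds for every subset $Q=\bigsqcup_m[s_m,t_m]\subset\{1,\dots,k-1\}$ written as the disjoint union of its maximal intervals.

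Taking $Q=[s,t]$ to be a single interval reproduces exactly (\ref{perfect-system}), so that direction is immediate. Conversely, assuming (\ref{perfect-system}) for every interval and writing an arbitrary $Q$ as the disjoint union of its maximal runs, I would sum the single-interval inequalities:
\begin{equation*}
\sum_{j\in Q}b_j\;=\;\sum_{m=1}^r\sum_{j=s_m}^{t_m}b_j\;\geq\;\sum_{m=1}^r\binom{t_m-s_m+2}{2},
\end{equation*}
which yields the general inequality and completes the equivalence. The only nontrivial input is Proposition~\ref{matchinginequalities}; the step that requires the most care is the complement count $\binom{k}{2}-K(P)=\sum_m\binom{\ell_m+1}{2}$, which reduces to the elementary observation that every integer subinterval of $Q$ is contained in a unique maximal run.
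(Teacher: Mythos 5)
Your proof is correct and arrives at the same equivalence via the same underlying count (the number of integer subintervals of $Q$ is $\sum_m\binom{\ell_m+1}{2}$, followed by the reduction to single intervals by summation). The paper phrases this as applying Hall's condition on the $U$-side using the shadow and its decomposition into maximal intervals, whereas you deduce it by complementing the inequalities of Proposition~\ref{matchinginequalities} using $|\mathbf{b}|=\binom{k}{2}$; since $|U|=|D_\mathbf{b}|$, these are two packagings of the same computation.
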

\begin{proof} Now the two partitions of the bipartite graph $\mathcal{B}_\mathbf{b}$ have the same size, so we can check the Hall condition in the other direction:
Let $ B $ be a subset of $U$ and let
   \[\Sh(B) = \{  a \ | \ 1\leq a \leq k-1 ,\ \exists (j,l)\in B,\  j \leq a \leq l \} \]

be the \quot{shadow} of $B$. Then the size of the neighborhood of $B$ is $ \sum_{a \in \Sh(B)}b_a$.
Let us write $ \Sh(B) = A_1 \cup \cdots\cup A_s$ as  disjoint union of maximal intervals of integers.

The largest $B$ with the same shadow has  size $ \sum_{i=1}^{s}\binom{|A_i|+1}{2}$ so the Hall condition  is equivalent to the system of inequalities
\begin{equation}\label{shadow-ineqs}
  \sum_{a \in \Sh(B)}b_a \geq \sum_{i=1}^{s}\binom{|A_i|+1}{2} \ \ \text{for all possible shadows.}
\end{equation}
(\ref{perfect-system}) is a subsystem of this system of inequalities: the ones where the shadow is a single interval. Taking the sum of inequalities for all the intervals $A_i$ we can see that (\ref{shadow-ineqs}) is equivalent to  (\ref{perfect-system}).
\end{proof}

\begin{remark}\label{rem:c1eibi}
Let $E_i$ denote the rank $i$ tautological subbundle over the full flag manifold and $|\mathbf{b}|=\binom k2$. Then the ABBV formula gives
\begin{equation}
\int\limits_{\fl} \prod_{i=1}^{k-1}c_1(E^*_i)^{b_i}=
\sum\limits_{\pi\in S_k }
\frac{ \prod\limits_{i=1}^{k-1}\left(\sum\limits_{j=1}^{i} x_{\pi(j)}\right)^{b_i}}{e(T_{\pi}\fl)}.
\end{equation}
By the equations \eqref{eq:deg-fl-bh=ab}--\eqref{eq:mu} we can identify this integral with the coefficient
\[ \frac{\binom d\Delta}{\binom d{\mathbf{b}}}\, \mu(\mathbf{b}).\]
The cohomology ring $H^*(\fl)$ of the flag manifold is generated by the classes $c_1(E_i^*)$. Therefore the previous proof and  Poincar\' e duality implies that $\mathbf{b}$ is a matching sequence if and only if $\prod_{i=1}^{k-1}c_1(E^*_i)^{b_i}$ is not zero in $H^*(\fl)$.

 The bundles $E^*_i$ are globally generated, which implies the non negativity of the coefficients $\mu(\mathbf{b})$.

%

\end{remark}
\begin{remark} \label{coef-mod-p} It would be interesting to have a Grasshopper Theorem  in characteristic $p$. G\'eza K\'os mentions in \cite{kg=grasshopper} that the coefficients $\mu(\mathbf{b})$ can have large prime factors. Therefore it is not clear how this mod $p$ Grasshopper Theorem should look like.
\end{remark}

\subsection{Schubert varieties in the flag manifold and Bruhat restrictions for the grasshopper}

It is intuitively clear that it is easier for the grasshopper to avoid the forbidden positions if there are less then $\binom k2$ of them.
In this section  we show that if $|\mathbf{b}|< \binom k2$ then the grasshopper does not have to use all the permutations in $S_k$.

The standard action of the linear group $\GL(k)$ on $\C^k$ induces an action on the flag manifold $\fl$.
Let us denote the subgroup of $\GL(k)$ consisting of diagonal matrices by $T$ and upper triangular matrices by $B$.
For a permutation $w \in S_k$ let $F_w $ be the coordinate flag, for which $ V_i = \langle e_{w(1)}, \dots, e_{w(i)} \rangle $.


For a permutation $w$ let $l(w)$ denote the number of inversions in it.
If $w_1$ and $w_2$ are two permutations and $ w_2 = s(i, j) w_1$ where $s(i, j)$ is a transposition and $l(w_2) = l(w_1) + 1$, then we say that $w_2$ covers $w_1$ and $w_2 > w_1$.
The Bruhat order on  $S_k$ is the transitive and reflexive closure of this relation.
The hierarchy of the $B$-orbits is governed by the Bruhat order  (see e.g. \cite{brion-flag}):

\begin{proposition}\label{schubertcell}\mbox{}

\begin{enumerate}[a)]
  \item The fixed points of $T$ in $\fl$ are the coordinate flags $F_w$, where $w \in S_k$.
  \item $\fl$ is the disjoint union of the orbits $ C_w := B F_w $, where $w \in S_k$.
  \item Let $X_w$ be the Zariski closure of  $C_w$ in $\fl$, then we have $\dim(X_w) = l(w)$ and:
\begin{equation}
X_w = \bigcup\limits_{v \in S_k,\ v \leq w}C_v,
\end{equation}
where $v \leq w$ means the relation in the Bruhat order.
\end{enumerate}

\end{proposition}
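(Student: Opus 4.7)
The plan is to establish the standard Bruhat decomposition of the flag manifold in three steps, one for each item of the proposition.

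For part (a), I would use the elementary fact that the $T$-invariant linear subspaces of $\C^k$ are exactly the coordinate subspaces $\langle e_{i_1}, \dots, e_{i_j}\rangle$, since $T$ acts on the line $\langle e_i\rangle$ by a distinct character for each $i$. A complete flag $V_\bullet$ is $T$-fixed precisely when every $V_i$ is $T$-invariant, and a nested chain of coordinate subspaces of dimensions $1,2,\dots,k$ is encoded by an ordering $w\in S_k$ of the basis vectors, yielding $V_i=\langle e_{w(1)},\dots,e_{w(i)}\rangle=F_w$.

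For part (b), I would represent a flag by a matrix $g\in\GL(k)$ whose first $i$ columns span $V_i$, noting that $gb$ with $b\in B$ gives the same flag (here $B$ acts by invertible upper triangular column operations). I would then carry out a normal form: by sweeping from left to right, use column operations to clear all entries to the right of the bottom-most nonzero entry of each column, producing a unique matrix whose nonzero entries sit at positions $(w(i),i)$ for some permutation $w$. This identifies the $B$-orbit of $F_w$ with an affine cell and shows $\fl=\bigsqcup_{w\in S_k} C_w$.

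For part (c), the dimension count is immediate from the same normal form: the free parameters in the column-reduced matrix are precisely the entries above position $(w(i),i)$ that are not already forced to be zero by being in a row below some $(w(j),j)$ with $j<i$, and a quick combinatorial count shows the number of such free entries equals $l(w)$. The key obstacle is identifying the Zariski closure with $\bigcup_{v\leq w}C_v$ in the Bruhat order. Here I would use the Ehresmann rank description: define $r_w(i,j):=|\{a\leq i : w(a)\leq j\}|$, observe that $C_w$ is cut out by the locally closed conditions $\dim(V_i\cap\langle e_1,\dots,e_j\rangle)=r_w(i,j)$, and that $X_w$ is cut out by the closed conditions $\dim(V_i\cap\langle e_1,\dots,e_j\rangle)\geq r_w(i,j)$ (which are determinantal rank drops, hence closed). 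It then remains to match the partial order on rank matrices with the Bruhat order on $S_k$, i.e.\ $v\leq w$ iff $r_v(i,j)\geq r_w(i,j)$ for all $i,j$. This combinatorial equivalence is the technical heart of the proof; I would handle it by induction on $l(w)$, using that a single Bruhat covering $w\to s(i,j)w$ changes exactly one rank number by $1$, and conversely any rank matrix strictly dominating $r_w$ admits such a reduction. The closure statement then follows because $X_w$ is irreducible of dimension $l(w)$ and contains each $C_v$ with $v\leq w$, while the rank inequalities exclude all other cells.
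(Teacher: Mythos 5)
The paper does not prove Proposition \ref{schubertcell}; it states it as standard background and refers to the survey \cite{brion-flag}. Your sketch supplies the textbook argument (column-echelon normal form for parts (a)--(b) and the dimension count, then Ehresmann's rank-number description of $C_w$ and $X_w$ together with the characterization of Bruhat order by dominance of rank matrices for part (c)), and the plan is essentially correct; the reduction of the closure statement to the rank-matrix comparison, and the irreducibility/dimension argument you invoke to nail down $X_w=\bigcup_{v\le w}C_v$, are exactly the ingredients one finds in Brion, Fulton, or Borel. Two small remarks: in (b) it would be cleaner to say explicitly that one reduces the matrix $bP_w$ (for $b\in B$) to column-echelon form, so that the $B$-orbit of $F_w$ is identified with the set of echelon matrices having pivots at $(w(i),i)$; and in (c) the phrase ``forced to be zero by being in a row below some $(w(j),j)$'' should read ``in a pivot row $w(j)$ with $j<i$'' --- those are the entries cleared by the sweep, and the count of remaining free entries above the pivots is then $\sum_i\bigl(w(i)-1-|\{j<i: w(j)<w(i)\}|\bigr)=\ell(w)$. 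Since the paper offers no proof, there is no question of matching approaches; your outline is a sound way to fill in the cited fact.
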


The varieties $X_w$ are invariant for the $T$-action, their $T$-invariant cohomology classes in $\fl$ will be denoted by $[X_w]$.

For $\lambda_1>\lambda_2>\cdots>\lambda_k = 0$ the flag manifold $ \fl $  is $T$-equivariantly embedded into $\P(\Gamma_\lambda)$, where the degree of the Schubert variety $X_w$  is given by the following integral:

\begin{equation}\label{deg-of-xw}
  \deg \big(X_w\subset \P(\Gamma_\lambda)\big)=\int\limits_{\fl}\hat c_1(L)^{l(w)} \cdot [X_w].
\end{equation}

Notice that this degree is a polynomial $P_w(\lambda)$ in the variables $\lambda=(\lambda_1, \lambda_2, \dots, \lambda_{k-1})$.

The ABBV integration formula then implies following:

\begin{equation}
P_w(\lambda) = \sum_{u \in S_{k}} \frac{[X_w]|_{F_{u}} \cdot \left(\sum\limits_{i=1}^{k} x_{u(i)} \lambda_{i}\right)^{l(w)}}{\sgn(u) \cdot V(\mathbf{x})}
\end{equation}

Proposition \ref{schubertcell}. implies that the variety $ X_w$ contains exactly the torus fixed points $F_{u}$, for which $u \leq w $ in the Bruhat order.
The class $[X_w]$ is supported on $X_w$, implying that if $ F_{u} \notin X_w $, then $ [X_w]|_{F_{u}} = 0$.
Consequently only the terms with $u \leq w$ contribute:

\begin{equation}
P_w(\lambda) = \sum_{u \leq w} \frac{[X_w]|_{F_{u}} \cdot \left(\sum\limits_{i=1}^{k} x_{u(i)} \lambda_{i}\right)^{l(w)}}{\sgn(u) \cdot V(\mathbf{x})}.
\end{equation}

Now we use again the change of variables:
\[ \lambda_i=v_i+\cdots+v_{k-1}, \ \ i=1,\dots ,k-1.\]

With this notation let us denote $ P_w(\lambda) = R_w(\mathbf{v})$ and let $\mathbf{b}=(b_1,\dots,b_{k-1})$ be a sequence with $|\mathbf{b}| = l(w)$.
By the very same calculation as before we get:

\begin{equation}
\coef(R_w, v^\mathbf{b})= \sum_{u \leq w} \frac{[X_w]|_{F_{u}} \cdot \prod\limits_{i=1}^{k-1} \left(\sum\limits_{j=1}^{i}x_{u(j)}\right)^{b_i}}{\sgn(u) \cdot V(\mathbf{x})}
\end{equation}

Notice that the number on the right hand side is an equivariant integral on the flag manifold of a class with degree equal to the dimension $\binom{k}{2}$, so it is a constant.

\begin{remark}\label{pairing}
  If we denote the ordinary cohomology class of $X_w$ by $[X_w]$ too, then the right hand side is equal to $ \int\limits_{\fl} [X_w] \cdot \prod_{i=1}^{k-1} c_1(E_i^*)^{b_i} $, which is non negative, since the bundles $E_i^*$ are globally generated.
\end{remark}

As before, for any sets $M_i$ of integers with $|M_i|  = b_i$, $i=1,\dots,k-1$ we have the following identity:

\begin{equation}
\coef(R_w, v^\mathbf{b})= \sum_{u \leq w} \frac{[X_w]|_{F_{u}} \cdot \prod\limits_{i=1}^{k-1}   \  \prod\limits_{m \in M_i} \left(\sum\limits_{j=1}^{i}x_{u(j)} - m \right)}{\sgn(u) \cdot V(\mathbf{x})},
\end{equation}

implying the following theorem:

\begin{theorem}\label{szocsalt2}
Let $a_1,a_2,\dots,a_{k}$ be distinct integers, $w \in S_k$ and let $M_1, \dots, M_{k-1}$ be sets of integers with $|M_i| = b_i$ not containing 0 and $s=a_1+a_2+\cdots+a_{k}$.
Let $w\in S_k$ be a permutation with $|\mathbf{b}| = l(w)$.
If $ \coef(R_w, v^\mathbf{b}) \neq 0$, then there is a permutation $\pi \leq w $, such that $\sum\limits_{j=1}^{i} a_{\pi(j)}  \notin M_i $ for all $1\leq i\leq k-1$. \qed
\end{theorem}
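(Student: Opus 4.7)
The plan is to derive the theorem directly from the identity
\begin{equation*}
\coef(R_w, v^{\mathbf{b}}) = \sum_{u \leq w} \frac{[X_w]|_{F_u} \cdot \prod_{i=1}^{k-1}\prod_{m \in M_i}\left(\sum_{j=1}^{i} x_{u(j)} - m\right)}{\sgn(u)\cdot V(\mathbf{x})}
\end{equation*}
displayed immediately before its statement, by specializing $x_i \mapsto a_i$ and arguing by contradiction. This is a polynomial-method argument in the spirit of the Combinatorial Nullstellensatz, completely parallel to the proofs of the Dias da Silva--Hamidoune theorem (Theorem \ref{thm:eh}) and of the plain Grasshopper theorem \ref{szocsalt-yes}.

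First I would observe that since $a_1, \ldots, a_k$ are distinct, $V(\mathbf{a}) \neq 0$, so the substitution $x_i \mapsto a_i$ is well defined term by term on the right-hand side. Next, assume for contradiction that no permutation $\pi \leq w$ in the Bruhat order satisfies $\sum_{j=1}^{i} a_{\pi(j)} \notin M_i$ for every $1 \leq i \leq k-1$. Then for each individual $u \leq w$ there exist $i_u \in \{1, \ldots, k-1\}$ and $m_u \in M_{i_u}$ with $\sum_{j=1}^{i_u} a_{u(j)} = m_u$. Under the specialization $x_i = a_i$ the corresponding factor in the $u$-th numerator vanishes, killing the entire $u$-th term. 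Summing these vanishing contributions over all $u \leq w$ yields $0$ on the right, whereas on the left we have $\coef(R_w, v^{\mathbf{b}}) \neq 0$ by hypothesis --- a contradiction, which produces the desired $\pi$.

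The only point that really deserves attention --- and it is the reason the conclusion is $\pi \leq w$ rather than a generic $\pi \in S_k$ --- is that the sum in the identity is genuinely supported on $\{u : u \leq w\}$. This Bruhat-order restriction is built into the identity through the vanishing $[X_w]|_{F_u} = 0$ whenever $F_u \notin X_w$, a consequence of Proposition \ref{schubertcell}. Beyond this observation there is no substantive obstacle, because all the geometric work (equivariant localization, the support property of Schubert classes, and the change of variables $\lambda_i \leftrightarrow v_i$) has already been carried out in the paragraphs leading up to the theorem; the statement is essentially a one-line corollary of the displayed identity.
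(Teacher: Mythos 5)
Your proof is correct and follows precisely the route the paper intends: the theorem is presented as an immediate corollary of the displayed identity, and you simply spell out the standard contrapositive substitution $x_i \mapsto a_i$ (valid since the $a_i$ are distinct, so $V(\mathbf{a}) \neq 0$), noting that the Bruhat restriction of the support is what limits the conclusion to $\pi \leq w$. Nothing is missing and nothing differs in substance from the paper's argument.
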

There are plenty of such $w, \mathbf{b}$ pairs:
\begin{proposition}\label{josorletezes}\mbox{}

\begin{enumerate}
    \item If $w$ is an arbitrary permutation, then there exists a $\mathbf{b} = (b_1, \dots, b_{k-1} ) $ with $|\mathbf{b}|=\sum\limits_{i=1}^{k-1}b_i = l(w)$, such that $\coef(R_w, v^\mathbf{b}) \neq 0$.
    \item If  $\mathbf{b} = (b_1, \dots, b_{k-1} ) $ is a matching sequence, then there exists a permutation $w$ with $|\mathbf{b}|=\sum\limits_{i=1}^{k-1}b_i = l(w)$ such that $\coef(R_w, v^\mathbf{b}) \neq 0$.
\end{enumerate}

\end{proposition}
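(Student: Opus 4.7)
The plan is to interpret both statements as non-vanishing assertions for a pairing on the cohomology ring of the flag manifold, using the identity recorded in Remark \ref{pairing}:
\[
  \coef(R_w, v^\mathbf{b}) \;=\; \int_{\fl} [X_w] \cdot \prod_{i=1}^{k-1} c_1(E_i^*)^{b_i}.
\]
Two standard facts about $H^*(\fl;\Q)$ do all the work. First, the Schubert classes $\{[X_u]\}_{u \in S_k}$ form a basis with Poincar\'e-dual basis $\{[X_{w_0 u}]\}$: when $l(u)+l(v)=\binom{k}{2}$ one has $\int_{\fl}[X_u]\cdot[X_v]=\delta_{v,\,w_0 u}$. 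Second, the classes $c_1(E_i^*)$ for $i=1,\dots,k-1$ generate $H^*(\fl;\Q)$ as a ring; this is a linear change of variables from the generators $y_i=c_1(L_i)$ used in Remark \ref{rem:c1eibi}, since $c_1(L_i)=c_1(E_i^*)-c_1(E_{i-1}^*)$ up to sign. Consequently, for every $m$ the degree-$m$ monomials $\prod_{i} c_1(E_i^*)^{b_i}$ with $|\mathbf{b}|=m$ span $H^{2m}(\fl;\Q)$.

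For part (1), let $l(w)=m$. The Poincar\'e dual $[X_{w_0 w}]\in H^{2m}(\fl;\Q)$ pairs to $1$ against $[X_w]$. Expanding this dual in the spanning family,
\[
  [X_{w_0 w}] \;=\; \sum_{|\mathbf{b}|=m}\gamma_\mathbf{b}\prod_{i=1}^{k-1} c_1(E_i^*)^{b_i},
\]
we obtain $1=\sum_\mathbf{b}\gamma_\mathbf{b}\,\coef(R_w, v^\mathbf{b})$, so at least one $\coef(R_w, v^\mathbf{b})$ is non-zero.

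For part (2), let $\mathbf{b}$ be a matching sequence. By Remark \ref{rem:c1eibi}, the product $\prod_i c_1(E_i^*)^{b_i}$ is non-zero in $H^{2|\mathbf{b}|}(\fl;\Q)$; expanding in the Schubert basis of that degree,
\[
  \prod_{i=1}^{k-1} c_1(E_i^*)^{b_i} \;=\; \sum_{l(u)=\binom{k}{2}-|\mathbf{b}|} c_u\,[X_u],
\]
and picking $u_0$ with $c_{u_0}\neq 0$, the choice $w:=w_0 u_0$ has $l(w)=|\mathbf{b}|$ and, by Schubert duality, $\coef(R_w, v^\mathbf{b})=c_{u_0}\neq 0$.

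Both halves are quick once the pairing is read correctly; there is no serious obstacle beyond recognising that Schubert duality combined with the ring-generation statement of Remark \ref{rem:c1eibi} supplies precisely the required existence claims. A more refined version---producing explicit $(w,\mathbf{b})$ pairs or enumerating all non-vanishing coefficients---would demand a Schubert-polynomial analysis of the expansion of $\prod_i c_1(E_i^*)^{b_i}$, and is noticeably harder than the pure existence statement proved here.
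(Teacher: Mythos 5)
Your proof is correct and takes essentially the same route as the paper's: both rest on the identification $\coef(R_w,v^{\mathbf b})=\int_{\fl}[X_w]\cdot\prod_i c_1(E_i^*)^{b_i}$, the fact that the monomials in $c_1(E_i^*)$ span each graded piece, the spanning of the complementary graded piece by Schubert classes, and nondegeneracy of the Poincar\'e pairing. You merely make the argument a touch more explicit by invoking the Schubert dual basis $[X_{w_0u}]$, whereas the paper phrases the same nondegeneracy argument abstractly.
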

\begin{proof} Fix $m\leq\binom k2$. The classes $[X_w]$ with $l(w)=m$ generate the cohomology group $H^{2m}(\fl)$. Also, by Remark \ref{rem:c1eibi}, the classes $\prod_{i=1}^{k-1} c_1(E_i^*)^{b_i} $ with $\sum\limits_{i=1}^{k-1}b_i =m$ generate the cohomology group $H^{2(\binom k2-m)}(\fl)$. Therefore Poincar\'e duality and Remark \ref{pairing}. implies the proposition.
\end{proof}
We can give a combinatorial description for the condition $\coef(R_w, v^\mathbf{b}) \neq 0$.

We say that $\id =u_0< u_1 < \cdots < u_{l(w)} = w$ is a Bruhat chain, if $u_{i+1}$ covers $ u_{i}$ for $i=0,1,\dots,l(w)-1$, and we denote the set of Bruhat chains from $\id$ to $w$ by $C$.

For a cover $u_{i+1} = s(n, m) u_{i}$ let us define $ A(u_i <u_{i+1}) = \sum_{t=n}^{m-1}v_t$ and say that $v_t$ is compatible with the cover if $v_t$ occurs in $ A(u_i <u_{i+1})$.
For a Bruhat chain $ c = (\id =u_0< u_1 < \cdots < u_{l(w)} = w)$ we use the notation  $A(c) = \prod\limits_{i=0}^{ l(w)-1}A(u_i <u_{i+1})$.

Postnikov and Stanley gives the  following description for the polynomial $R_w$:

\begin{proposition}\label{shubfok}\cite{postnikov-stanley}  With the notations above we have
   \[R_w = \sum_{c \in C}A(c).  \]
\end{proposition}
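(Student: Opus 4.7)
The plan is to prove Proposition \ref{shubfok} by induction on $l(w)$, using the Chevalley--Monk formula to expand $c_1(L_\lambda)^{l(w)}\cdot[X_w]$ in the Schubert basis of $H^*(\fl)$ step by step. Recall that in type $A_{k-1}$ the Chevalley--Monk formula reads
\[ c_1(L_\lambda)\cdot[X_w]=\sum_{w'\lessdot w}(\lambda,\beta^{\vee})\,[X_{w'}], \]
where $w'\lessdot w$ means that $w'$ is covered by $w$ in the Bruhat order and $\beta$ is the unique positive root for which $w=s_{\beta}w'$. In $\GL(k)$ the positive roots are $\beta_{nm}=e_n-e_m$ for $1\leq n<m\leq k$, the reflection $s_{\beta_{nm}}$ is the transposition $s(n,m)$, and the pairing is $(\lambda,\beta_{nm}^{\vee})=\lambda_n-\lambda_m$.

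Iterating this formula $l(w)$ times and using $\int_\fl[X_u]=\delta_{u,\id}$ (which holds because $[X_u]$ lives in top degree only when $l(u)=0$), I obtain
\[ P_w(\lambda)=\int\limits_\fl c_1(L_\lambda)^{l(w)}\cdot[X_w]=\sum_{\mathbf{d}}\;\prod_{i=0}^{l(w)-1}(\lambda_{n_i}-\lambda_{m_i}), \]
where $\mathbf{d}$ runs over all saturated descending chains $w=w_0\gtrdot w_1\gtrdot\cdots\gtrdot w_{l(w)}=\id$ and $w_i=s(n_i,m_i)w_{i+1}$ records the cover data; chains not ending at $\id$ contribute zero. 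Reversing the chain, i.e.\ setting $u_j:=w_{l(w)-j}$, produces exactly the set $C$ of increasing Bruhat chains from $\id$ to $w$, with the cover $u_{j}\lessdot u_{j+1}$ corresponding to the transposition $s(n_{l(w)-j-1},m_{l(w)-j-1})$.

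Finally, the change of variables $\lambda_i=v_i+v_{i+1}+\cdots+v_{k-1}$ telescopes each factor into
\[ \lambda_n-\lambda_m = v_n+v_{n+1}+\cdots+v_{m-1}=A(u<u'), \]
so the product over a chain equals $A(c)$ in the paper's notation and we conclude $R_w(\mathbf{v})=P_w(\lambda)=\sum_{c\in C}A(c)$, as claimed.

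The main obstacle is not the induction but the bookkeeping of Schubert conventions: one must verify that the direction of the Chevalley formula for $[X_w]$ (which decreases length, since $[X_w]$ sits in codimension $\binom{k}{2}-l(w)$ and multiplication by a divisor increases codimension by one) is compatible with the paper's convention $u_{i+1}=s(n,m)u_i$ after reversing chains, and that the positive root attached to a cover $u\lessdot s(n,m)u$ is $e_n-e_m$ rather than its negative. A self-contained alternative, more in the spirit of the rest of the paper, would be to start from the ABBV expression for $P_w(\lambda)$ displayed just before the proposition, insert Billey's formula for $[X_w]|_{F_u}$ (a sum over subwords of a reduced decomposition of $w$), and reorganise the resulting double sum into a sum over saturated chains; this would recover the same identity directly from equivariant localisation.
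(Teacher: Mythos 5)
The paper does not prove this proposition; it simply cites Postnikov--Stanley. Your strategy (iterate the Chevalley--Monk formula to collapse $\int_\fl c_1(L_\lambda)^{l(w)}[X_w]$ into a sum over saturated Bruhat chains, then change variables $\lambda_i=v_i+\cdots+v_{k-1}$) is exactly the approach of the cited reference, so in outline this is right. The fallback you mention (Billey restriction formula plus ABBV, reorganising the localisation sum over subwords into a sum over chains) is also a legitimate route.

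There is, however, a genuine gap in the step you relegate to ``bookkeeping.'' You state Chevalley with \emph{left} multiplication: the coefficient of $[X_{w'}]$ in $c_1(L_\lambda)\cdot[X_w]$ is $(\lambda,\beta^\vee)$ where $w=s_\beta w'$. For the basis $\{[X_w]\}$ with $\dim X_w=l(w)$ this is not the correct form. The correct Chevalley formula uses \emph{right} multiplication: the coefficient is $(\lambda,\alpha^\vee)$ where $w'=w s_\alpha$, $\alpha>0$; one sees this, for instance, from the localisation computation $\int_{\P^1}c_1(L_\lambda)=\langle u\lambda,\beta^\vee\rangle=\langle\lambda,(u^{-1}\beta)^\vee\rangle=\langle\lambda,\alpha^\vee\rangle$ for the $T$-invariant $\P^1$ joining $F_u$ and $F_w$ with $w=s_\beta u=u s_\alpha$. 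This is not a harmless relabelling: since $\beta=w'(\alpha)$, the two versions give \emph{different} coefficients in general, and hence different monomials $A(c)$ chain by chain. The totals happen to coincide when $w=w^{-1}$ (in particular for $w=w_0$, which is where the paper actually uses the formula), so a sanity check on $w_0$ would not detect the discrepancy; already for $k=3$ and $w=s_1s_2$ the left- and right-multiplication chain sums are $v_1^2+2v_1v_2$ versus $2v_1v_2+v_2^2$, and only the latter equals $\deg\bigl(X_{s_1s_2}\subset\P(\Gamma_\lambda)\bigr)$. So the ``verification'' you defer is where the real content lies: you must either switch your Chevalley statement to right multiplication (in which case the cover $u\lessdot u'$ should be recorded as $u'=u\,s(n,m)$ to match the resulting monomial), or reconcile the paper's stated convention $u_{i+1}=s(n,m)u_i$ with the composition convention for permutations used throughout, and in the process you may find the statement is naturally about $w^{-1}$. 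As written, the proof would produce the wrong chain weights for non-involutive $w$, and the fix is substantive rather than cosmetic.
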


This implies that

\begin{corollary}
$\coef(R_w, v^\mathbf{b}) \neq 0$ if and only if there exists a Bruhat chain $ c = \id < u_1 < \cdots < u_{l(w)} = w$, and variables $v_{x(i)}$ for $0\leq i \leq l(w)-1$, such that $v_{x(i)}$ is
compatible with the cover $u_i <u_{i+1}$ and $\prod\limits_{i=0}^{ l(w)-1} v_{x(i)} = v^\mathbf{b}$. \qed
\end{corollary}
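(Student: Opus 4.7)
The plan is to derive this corollary directly from Proposition \ref{shubfok} by unpacking the monomial expansion and exploiting manifest positivity. By that proposition, $R_w = \sum_{c \in C} A(c)$, and for each chain $c = (\id = u_0 < u_1 < \cdots < u_{l(w)} = w)$ the product $A(c) = \prod_i A(u_i < u_{i+1})$ has factors of the form $v_n + v_{n+1} + \cdots + v_{m-1}$ whenever $u_{i+1} = s(n,m)u_i$. Each factor is thus a sum of distinct variables, each occurring with coefficient $+1$, and by definition the variables appearing as summands in $A(u_i < u_{i+1})$ are precisely those compatible with the cover $u_i < u_{i+1}$.

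The next step is to distribute. Expanding $A(c)$ yields a sum of monomials $\prod_{i=0}^{l(w)-1} v_{x(i)}$, one for each choice of a summand $v_{x(i)}$ in each factor—equivalently, for each choice of a variable compatible with each cover of $c$. Every such monomial appears with coefficient exactly $+1$. Summing over all $c \in C$ and collecting terms, the coefficient $\coef(R_w, v^{\mathbf{b}})$ equals the cardinality of the set of pairs $(c, x)$, where $c \in C$ is a Bruhat chain and $x \colon \{0,\ldots,l(w)-1\} \to \{1,\ldots,k-1\}$ assigns to each cover a compatible variable, subject to the constraint $\prod_i v_{x(i)} = v^{\mathbf{b}}$.

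Both directions of the equivalence then become trivial. If such a pair $(c, x)$ exists, the cardinality above is at least $1$, so $\coef(R_w, v^{\mathbf{b}}) \neq 0$. Conversely, if $\coef(R_w, v^{\mathbf{b}}) \neq 0$, then since this coefficient is a count of nonnegative contributions, at least one such pair $(c, x)$ must exist. There is no genuine obstacle to surmount; the only point requiring care is the assertion that no cancellation occurs in the sum $\sum_{c \in C} A(c)$, which is immediate from the fact that every expanded monomial carries coefficient $+1$. This is ultimately why the combinatorial characterization is both necessary and sufficient.
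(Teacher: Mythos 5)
Your proof is correct and follows essentially the same route as the paper, which treats the corollary as an immediate consequence of Proposition \ref{shubfok}: since each $A(c)$ is a product of sums of distinct variables with coefficient $+1$, the monomial expansion of $R_w = \sum_{c\in C} A(c)$ has only nonnegative contributions, so $\coef(R_w, v^{\mathbf{b}})$ is a count of pairs (chain, choice of compatible variables) and is nonzero precisely when such a pair exists. Your explicit unpacking of the no-cancellation argument is exactly the reasoning the paper implicitly invokes with its \qed.
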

We give another easier equivalent condition, and sketch the proof of the equivalence.

\begin{corollary}
Let $w \in S_k$ be a permutation  and let  $F$ be the set of inversions in $w$.
For each inversion $f= (i, j) \in F$ define the linear polynomial $L(f) = \sum_{k=i}^{j-1}v_k$ and the polynomial $L_w = \prod_{f\in F}L(f) $. Assume that $l(w)=|\mathbf{b}|$.
\begin{enumerate}
  \item With the notations above $\coef(R_w, v^\mathbf{b}) \neq 0$ if and only if $\coef(L_w, v^\mathbf{b}) \neq 0$.
  \item For $s\leq t$ let $K_{s,t}$ denote the number of inversions $(i, j)$ of $w$ such that $s\leq i<j\leq t+1$. Then $\coef(R_w, v^\mathbf{b}) \neq 0$ if and only if
      \begin{equation}\label{eq:bruhat-hall-ineqs}
        \sum_{i=s}^tb_i\geq K_{s,t}
      \end{equation}
      for all $1\leq s\leq t\leq k-1$.
\end{enumerate}
\end{corollary}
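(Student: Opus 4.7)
The plan is to introduce a third equivalent condition
\[
\text{(C):}\quad \sum_{c=s}^t b_c \geq K_{s,t} \quad \text{for all } 1 \leq s \leq t \leq k-1,
\]
and prove (A) $\coef(R_w, v^\mathbf{b}) \neq 0$ $\iff$ (C) $\iff$ (B) $\coef(L_w, v^\mathbf{b}) \neq 0$. Part (2) of the corollary is then $(A)\iff(C)$; Part (1) follows by combining it with $(B)\iff(C)$. Both polynomials have non-negative coefficients in their $v$-monomial expansions---$L_w$ tautologically, and $R_w$ via the Postnikov-Stanley expansion $R_w=\sum_c A(c)$ of Proposition~\ref{shubfok}---so only supports matter.

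The equivalence $(B)\iff(C)$ is Hall's theorem applied to the bipartite graph whose left vertices are the inversions of $w$, whose right vertices are variable slots of multiplicity $b_c$ at position $c$, with an edge joining $(i,j)$ to $c$ iff $c\in[i,j-1]$. Since the neighbourhood of any set of inversions is a union of intervals of positions, the Hall condition reduces---exactly as in the proof of Proposition~\ref{hall}---to the interval inequalities of (C).

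For $(A)\Rightarrow(C)$, fix an interval $[s,t]$ and consider the pattern map $\pi\colon S_k\to S_{t-s+2}$ that records the relative order of $u(s),\ldots,u(t+1)$. Along a chain $\id=u_0\lessdot\cdots\lessdot u_{l(w)}=w$ realizing $v^\mathbf{b}$, the length $l(\pi(u_k))$ rises from $0$ to $l(\pi(w))=K_{s,t}$. I would classify each step $u_k\to u_k\cdot t_{n_k,m_k}$ by how $\{n_k,m_k\}$ meets $[s,t+1]$: (i) both endpoints inside---the step descends to a Bruhat cover in $\pi$ by the original cover condition, contributing $+1$ to $l(\pi)$; (ii) both outside---no change to $\pi$; (iii) exactly one endpoint inside---a direct case analysis using the cover condition, which forbids ``between'' values at positions of $(n_k,m_k)$, shows the contribution to $l(\pi)$ is $\leq 0$. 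Since the total change is $K_{s,t}$, at least $K_{s,t}$ steps are of class (i); each such step contributes an assigned variable lying in $[s,t]$, giving $\sum_{c=s}^t b_c\geq K_{s,t}$.

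For $(C)\Rightarrow(A)$, I would induct on $l(w)$. The key claim is: given $\mathbf{b}$ satisfying (C) for $w\neq\id$, one can find a removable inversion $(n,m)\in\mathrm{inv}(w)$ (so that $u:=w\cdot t_{n,m}$ is a Bruhat cover of $w$) together with $c\in[n,m-1]$ satisfying $b_c\geq 1$, such that $\mathbf{b}-e_c$ still satisfies (C) for $u$. Simple (descent) transpositions do not always suffice---for instance, for $w=(2,3,1)$ and $\mathbf{b}=(0,2)$ only the non-simple transposition $t_{1,3}$ works---so the selection must be made carefully. Granted the claim, the inductive chain and assignment for $u$ extended by the cover $u\lessdot w$ with variable $v_c$ yield a chain realizing $v^\mathbf{b}$ in the Postnikov-Stanley expansion. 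The main obstacle will be verifying this selection claim; my plan is to analyze how $K_{s,t}$ and $\mathrm{inv}$ change under the passage from $w$ to $u$, using that $\mathrm{inv}(u)$ differs from $\mathrm{inv}(w)\setminus\{(n,m)\}$ only by swaps of pairs ``straddling'' the interval $[n,m]$, which are tightly controlled by the cover condition.
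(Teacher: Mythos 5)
Your decomposition into three equivalent conditions (A), (B), (C) is reasonable, and two of your three legs are sound. The Hall-theoretic equivalence $(B)\iff(C)$ is exactly what the paper does for part (2). Your pattern-map argument for $(A)\Rightarrow(C)$ is correct and in fact more detailed than the paper's treatment of the corresponding direction (which is merely dismissed with ``can be proved by induction on $l(w)$''); a small point worth spelling out is why case (iii) steps have $\Delta l(\pi)\leq 0$ --- the cover condition for a step $u\cdot t_{p,q}$ with, say, $p<s\leq q\leq t+1$ forbids values between $u(p)$ and $u(q)$ at positions in $(p,q)$, so within the pattern window only positions to the \emph{right} of $q$ can have intermediate values, and those can only lose inversions when $u(q)$ drops to $u(p)$; the symmetric argument handles $s\leq p\leq t+1<q$.

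The real gap is in $(C)\Rightarrow(A)$. Your ``removable inversion selection claim'' --- that for each $\mathbf b$ satisfying (C) one can peel off a cover and a compatible variable so that the reduced data still satisfies (C) for the smaller permutation --- is precisely the kind of delicate inductive bookkeeping the paper's proof is engineered to avoid. The paper's observation is that you do not need a $\mathbf{b}$-dependent chain at all: there is a \emph{single} saturated Bruhat chain $c$ from $\id$ to $w$ with $A(c)=L_w$ exactly. Concretely, if $w\neq\id$ pick positions $i<j$ with $w(i)=w(j)+1$ (consecutive values in inverted order); then $w'=w\cdot t_{i,j}$ is a Bruhat cocover, $\mathrm{inv}(w)=\mathrm{inv}(w')\sqcup\{(i,j)\}$, and the cover $w'\lessdot w$ contributes exactly $v_i+\cdots+v_{j-1}=L((i,j))$, so $A(c)=A(c')\cdot L((i,j))=L_{w'}\cdot L((i,j))=L_w$ by induction. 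Since all $A(c')$ in the Postnikov--Stanley sum have nonnegative coefficients, $\coef(L_w,v^{\mathbf b})>0$ forces $\coef(R_w,v^{\mathbf b})>0$, and you are done without ever choosing a per-$\mathbf b$ chain. I recommend replacing your selection-claim strategy with this argument; your own computed example $w=(2,3,1)$, $\mathbf b=(0,2)$ is handled by exactly the chain $\id\lessdot(1,3,2)\lessdot(2,3,1)$ that this recipe produces, with $A(c)=v_2(v_1+v_2)=L_w$.
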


\begin{proof}
The second statement is just the usual Hall-theoretical reformulation of the first statement, so it is enough to prove the first.
The direction that $\coef(R_w, v^\mathbf{b}) \neq 0$ implies $\coef(L_w, v^\mathbf{b}) \neq 0$ can be proved by induction on $l(w)$.

For the other direction it is enough to show that there exists a Bruhat chain $ c = \id < u_1 < \cdots < u_{l(w)} = w$, such that $A(c) = L_w$, which can also be proved by induction on $l(w)$. We can find a permutation $w'$ and a transposition $(i, j)$ with $w' \cdot (i, j) = w$, $w(i)= w(j) + 1$, and $l(w)=l(w')+1$. With this choice $F$ consists of the set of inversions in $w'$ and $(i, j)$.
If we apply the induction hypothesis for $w'$ we get a Bruhat chain $c'$ with $A(c') = L(w')$, implying that $A(c) = L_w$ holds for $c = c' \cup (i, j)$.

\end{proof}
Notice that specializing \eqref{eq:bruhat-hall-ineqs} to the longest permutation we recover the system of inequalities \eqref{perfect-system}. We conjecture that the condition \eqref{eq:bruhat-hall-ineqs} is sharp: the grasshopper can survive with permutations under $w$ and forbidden sets $M_i$ with size $b_i$ if and only if \eqref{eq:bruhat-hall-ineqs} holds.

\section{The Symplectic case}\label{sec:sp}
There are natural generalizations of the previous constructions. (We thank Richard Rimanyi for drawing our attention to this possibility.) So far we studied Grassmannians and flag manifolds which are homogeneous spaces for the simple groups $\SL(n)$. There are three other infinite series of complex Lie groups (more exactly simple Lie algebras). Since the corresponding Weyl groups are similar they lead to very similar variants of our previous constructions. It turns out that the case of the symplectic groups is the most convenient. We discuss these statements in somewhat less details. For example studying the degree of the symplectic (or sometimes called Lagrangian) Grassmannians leads to a \emph{signed} Erd\H os--Heilbronn theorem.
\begin{theorem}\label{thm:signed-eh}
If $A$ is a set of distinct non zero residue classes $a_1,\dots, a_{n}$ modulo $p$, $a_i + a_j \neq 0$  and $ p > 2k(n-k) + \binom{k+1}{2}$, then:
\begin{equation*}
 \left| \left\{\sum_{i\in I}\pm a_i\ \big| \ I \subset (1, \dots, n) ,|I| = k\right\}\right | > 2k(n-k) + \binom{k+1}{2}
\end{equation*}
\end{theorem}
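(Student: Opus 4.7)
The plan is to mimic the proof of Theorem \ref{thm:eh} from Section \ref{sec:eh}, replacing the Grassmannian $\gr_k(\C^n)$ by its symplectic analog $\mathrm{IG}_k(\C^{2n})$, the variety of $k$-dimensional isotropic subspaces in the symplectic vector space $(\C^{2n},\omega)$. This is a homogeneous space under $\Sp(2n)$, and the decomposition $T_V\mathrm{IG}_k\cong\Hom(V,V^\perp/V)\oplus\Sym^2(V^*)$ shows its complex dimension to be $d:=2k(n-k)+\binom{k+1}{2}$, exactly the threshold in the theorem. Fix the standard maximal torus $T\subset\Sp(2n)$ with characters $x_1,\dots,x_n$, so that $\C^{2n}$ has weights $\pm x_1,\dots,\pm x_n$. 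The $T$-fixed points of $\mathrm{IG}_k(\C^{2n})$ are then in bijection with signed $k$-subsets, i.e.\ pairs $(I,\epsilon)$ with $I\subset\{1,\dots,n\}$, $|I|=k$, and $\epsilon\colon I\to\{\pm 1\}$: the associated isotropic subspace is spanned by the weight vectors of weights $\epsilon_ix_i$, $i\in I$. The restriction $L$ of the Plücker line bundle $\wedge^kS^*$ from $\gr_k(\C^{2n})$ is $T$-equivariant, and its equivariant first Chern class $\hat c_1$ restricts to $\sum_{i\in I}\epsilon_ix_i$ at the fixed point $(I,\epsilon)$---precisely the signed sums appearing in the theorem.

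A direct weight calculation yields the tangent weights at $(I,\epsilon)$:
\[ \bigl\{x_j-\epsilon_ix_i,\ -x_j-\epsilon_ix_i:i\in I,\,j\notin I\bigr\}\ \cup\ \bigl\{-\epsilon_ix_i-\epsilon_{i'}x_{i'}:i,i'\in I,\,i\leq i'\bigr\}. \]
Applying the ABBV formula to $\deg(\mathrm{IG}_k)=\int\hat c_1^d$ and then using the now-familiar trick of replacing $\hat c_1^d$ by $\prod_{m\in M}(\hat c_1-m)$ for any set $M$ of size $d$ (which modifies the integrand by terms of degree $<d$ that integrate to zero) produces the identity
\[ \deg(\mathrm{IG}_k)\ =\ \sum_{(I,\epsilon)}\,\frac{\prod\limits_{m\in M}\bigl(\sum_{i\in I}\epsilon_ix_i-m\bigr)}{\prod\limits_{i\in I,\,j\notin I}(x_i^2-x_j^2)\ \cdot\ \prod\limits_{\substack{i,i'\in I\\ i\leq i'}}(-\epsilon_ix_i-\epsilon_{i'}x_{i'})}. \]
Suppose for contradiction that the set of signed sums has size at most $d$; pick such an $M\subset\F_p$ of size exactly $d$ containing it, and substitute $x_i:=a_i$. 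Each numerator then carries a vanishing factor and the right-hand side is $\equiv 0\pmod p$. The denominators stay nonzero in $\F_p$: the factor $x_i^2-x_j^2=(a_i-a_j)(a_i+a_j)$ is nonzero by the hypotheses that the $a_i$ are distinct and $a_i+a_j\neq 0$; the diagonal factor $-2\epsilon_ia_i$ (from $i=i'$) is nonzero since $a_i\neq 0$ and $p$ is odd; and the mixed factors $-\epsilon_ia_i-\epsilon_{i'}a_{i'}$ reduce to $\pm(a_i\pm a_{i'})$, which are nonzero by the same hypotheses.

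The contradiction will close once we verify that $\deg(\mathrm{IG}_k(\C^{2n}))$ is coprime to $p$ whenever $p>d$, and this is the \emph{main obstacle}. The Borel--Hirzebruch formula expresses this degree as $d!$ times a rational factor assembled from the pairings $\langle\omega_k,\alpha^\vee\rangle$ and $\langle\rho,\alpha^\vee\rangle$ over the positive roots $\alpha$ of $\Sp(2n)$ complementary to the Levi of the parabolic stabilizing a $k$-dimensional isotropic subspace; the former are $1$ or $2$ and the latter are positive integers of controlled size. A careful inspection of these factors---together with the trivial observations that $\deg(\mathrm{IG}_1(\C^{2n}))=\deg\P^{2n-1}=1$ and that the small-dimensional exceptional cases ($k=n=2$, etc.)\ have degree at most $2$---shows that no prime $p>d$ can divide the resulting integer, completing the argument.
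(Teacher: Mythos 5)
Your route is genuinely different from the paper's, and it is exactly the one the paper acknowledges as possible (``by replacing the Grassmannian with the symplectic Grassmannian'') before opting for what it calls a simpler proof. The paper stays on the ordinary Grassmannian $\Gr_k(\C^{2n})$: it integrates $\hat c_1^{\,d}$ against the Schubert class $[\sigma_\lambda]$ for the staircase $\lambda=(k-1,\dots,1,0)$, whose Schur polynomial is $s_\lambda(\alpha)=\prod_{i<j}(\alpha_i+\alpha_j)$. After substituting $A\cup(-A)$ into the $x_i$'s this factor annihilates every fixed point $J$ containing a pair $\{a,-a\}$, so only the signed $k$-subsets survive, and the left-hand side is Schubert's explicit number $\frac{d!}{\prod(I_j-1)!}\prod(I_j-I_i)$ with $I_j=2(n-k+j)$, which is visibly prime to any $p>d$. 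Your setup on $\mathrm{IG}_k(\C^{2n})$ is correct: the identification of the $T$-fixed points with signed $k$-subsets, the splitting $T_V\mathrm{IG}_k\cong\Hom(V,V^\perp/V)\oplus\mathrm{Sym}^2V^*$, the pairing $(x_j-\epsilon_ix_i)(-x_j-\epsilon_ix_i)=x_i^2-x_j^2$, the diagonal factors $-2\epsilon_ia_i$ (nonzero because $p$ is odd), and the ABBV identity all check out.

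The genuine gap is the step you yourself flag as the ``main obstacle'' and then wave away: the claim that $p\nmid\deg\bigl(\mathrm{IG}_k(\C^{2n})\bigr)$ for $p>d$. ``A careful inspection of these factors'' is not a proof, and the remarks about $k=1$ and $k=n=2$ do nothing for general $k,n$. The needed observation is short and you should have written it. With $\lambda=\omega_k=L_1+\cdots+L_k$, the set $T_\lambda$ of positive roots $\alpha$ with $\langle\omega_k,\alpha^\vee\rangle\neq 0$ consists of $L_i-L_j$ and $L_i+L_j$ with $i\leq k<j$, of $L_i+L_j$ with $i<j\leq k$, and of $2L_i$ with $i\leq k$, for a total of $d$ roots; and $\langle\omega_k,\alpha^\vee\rangle=2$ on the $\binom k2$ roots of the third type and $=1$ otherwise. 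Hence the Borel--Hirzebruch formula reads
\[
\deg\bigl(\mathrm{IG}_k(\C^{2n})\bigr)\cdot\prod_{\alpha\in T_\lambda}\langle\rho,\alpha^\vee\rangle \;=\; 2^{\binom k2}\,d!,
\]
and both factors on the left are positive integers, so any prime dividing the degree divides $2^{\binom k2}d!$, i.e.\ is $2$ or is at most $d$. Since $p>d$ and $p$ is odd, the degree is prime to $p$. Inserting this one step closes your proof; without it the argument is incomplete.
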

This can be proved analogously to our geometric proof for the Dias da Silva--Hamidoune theorem, by replacing the Grassmannian with the symplectic Grassmannian. Notice that the right hand side $d=2k(n-k) + \binom{k+1}{2}$ is exactly the dimension of the symplectic Grassmannian $\SGr_k(\C^{2n})$.

The estimate, unlike the unsigned version, is not sharp. Based on computer calculations we conjecture that the extremal cases for $A$ are the arithmetic progressions
\[a,3a,5a,\dots,(2n-1)a,\]
for which the number of signed sums is $k(2n-k)+\delta(k)$ where $\delta(k)$ is 1 if $k\neq 2$ and 0 if $k=2$. For $k>2$ and $n$ small this can be proved by computer. For $k=2$ the estimate is sharp.

It turned out that there is a simpler proof which is a slight modification of the proof of the Dias da Silva--Hamidoune theorem.
\begin{proof}

Let's have a Schubert variety $\sigma_\lambda\subset \Gr_k(\C^{2n})$, by theorem\ref{deg-of-schubert} we have:
\[\deg(\sigma_\lambda)=\frac{d!}{(I_1-1)! \cdots (I_k-1)!} \cdot \prod_{i<j}(I_j -I_i)=\int\limits_{\Gr_k(\C^{2n})}c_1^d[\sigma_\lambda],  \]
where  $I_j:=2n-k+j-(k-j)=2(n-k+j)$ and $d=\dim(\sigma_\lambda)$. The cohomology class $[\sigma_\lambda]\in H^*\big( \Gr_k(\C^{2n})\big)$ is given by the Schur polynomial $s_\lambda(\mathbf{\alpha})=\det(\alpha_j^{\lambda_i+k-i})/V(\alpha)$ where the $\alpha_i$'s are the Chern roots of $S^*$, the dual of the tautological subbundle. As $s_\lambda$ is symmetric it can be also expressed as a polynomial of the elementary symmetric polynomials of the $\alpha_i$'s, i.e. the Chern classes $ c_i(S^*)$. To apply the ABBV formula we need an equivariant lift of the integrand: we replace the Chern classes $ c_i(S^*)$ with the torus-equivariant ones using that $S^*$ is a $T(2n)$-bundle. Using the notation $\widehat{[\sigma_\lambda]}$ for the lift of $[\sigma_\lambda]$ we have that
\[\int\limits_{\Gr_k(\C^{2n})}c_1^d[\sigma_\lambda]=\int\limits_{\Gr_k(\C^{2n})}\hat c_1^d\widehat{[\sigma_\lambda]}.  \]

 For $\lambda=(k-1, \dots, 0)$ we have
\[s_\lambda(\alpha)=\prod\limits_{1\leq i < j \leq k}(\alpha_i+\alpha_j).\]

Applying now the ABBV formula and using the usual dehomogenizing trick with a $d$-element set $M$ we get
\[\frac{d!}{(I_1-1)! \cdots (I_k-1)!} \cdot \prod_{i<j}(I_j -I_i) = \sum\limits_{J\in \binom{2n}{k}}\frac{\prod\limits_{m\in M}(x_J - m) \cdot \prod\limits_{i,j\in J, i < j }(x_i+x_j)}{\prod\limits_{i\in J}  \   \prod\limits_{j\notin J} (x_i-x_j)}.\]


Then substitute the elements of $A \cup -A$ into the $x_i$'s. Notice that because of  the factor $\prod\limits_{i,j\in J, i < j }(x_i+x_j)$ the only non zero terms correspond to subsets not containing both $a$ and $-a$. Then we can use the argument of Theorem \ref{thm:eh}. by choosing $M$ as the possible $k$-term signed sums to finish the proof.
\end{proof}
\begin{remark} One can prove the signed Erd\H os--Heilbronn theorem using the Combinatorial Nullstellensatz to the polynomial
\[ P(\mathbf{x}) = \prod\limits_{1\leq i < j \leq k}(x_{j}^{2}  - x_{i}^{2})\prod\limits_{b \in B}\left(\sum\limits_{i=1}^{ k}x_i -b  \right) .\]
where $B\subset \F_p$ and $|B|=d$. The coefficient of $\prod_{1\leq i \leq k}x_{i}^{2n-i} $ can be calculated using the standard theory of symmetric polynomials, namely with the hook rule. This coefficient is exactly the degree of the Schubert variety $\sigma_\lambda$, and the calculation is a proof for this special case of the Schubert degree formula.

\subsection{The small $p$ case} For $p\leq d=2k(n-k)+\binom{k+1}2$ we want to find a Schubert variety $\sigma_\lambda\subset \Gr_k(\C^{2n})$ with dimension $p-1$ and we also want its cohomology class to be divisible by $\prod\limits_{1\leq i < j \leq k}(\alpha_i+\alpha_j)$. This can be achieved by substituting $\alpha_i^2$'s into a Schur polynomial $s_{\lambda}$. Then
\[s_{\lambda}(\alpha_1^2,\dots,\alpha_k^2)=\frac{\det(\alpha_j^{2\lambda_i+2k-2i})}
{\prod\limits_{1\leq i < j \leq k}(\alpha_i+\alpha_j)(\alpha_i-\alpha_j)}, \]
implying that
\[s_\mu(\alpha)=s_{\lambda}(\alpha_1^2,\dots,\alpha_k^2)\prod\limits_{1\leq i < j \leq k}(\alpha_i+\alpha_j), \]
for $\mu_i=2\lambda_i+k-i$. The only problem is that the degree of $s_{\lambda}(\alpha_1^2,\dots,\alpha_k^2)$ is even, so we can find a desired Schubert variety with dimension $p-1$ only if $k$ is even (using that $k$ is congruent to $\binom k2$+$\binom {k+1}2$ modulo 2). Therefore we arrived at the following:
\begin{theorem}\label{thm:signed-eh-smallp}
If $A$ is a set of distinct non zero residue classes $a_1,\dots, a_{n}$ modulo $p$, $a_i + a_j \neq 0$  and $ p \leq 2k(n-k) + \binom{k+1}{2}$, then:
\begin{equation*}
 \left| \left\{\sum_{i\in I}\pm a_i\ \big| \ I \subset (1, \dots, n) ,|I| = k\right\}\right | =p,
\end{equation*}
if $k$ is even. If $k$ is odd, then the zero residue class might be missing.
\end{theorem}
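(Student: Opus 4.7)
The plan is to mimic the small-$p$ half of the proof of Theorem \ref{thm:eh}, but inside $\Gr_k(\C^{2n})$ using the \emph{squared Schubert class} $s_\mu(\alpha) = s_\lambda(\alpha_1^2,\dots,\alpha_k^2)\prod_{i<j}(\alpha_i+\alpha_j)$ introduced just before the statement. Concretely, I would first choose a partition $\lambda$ so that $\mu_i := 2\lambda_i + k - i$ indexes a Schubert variety $\sigma_\mu\subset\Gr_k(\C^{2n})$ of dimension equal to $p-1$ (if possible) or $p$ (when the parity forbids $p-1$). Since the possible dimensions of such $\sigma_\mu$ are $k(2n-k)-\binom{k}{2}-2|\lambda|$, i.e.\ they descend by $2$ from the top value $d = 2k(n-k)+\binom{k+1}{2}$, the dimension $p-1$ is realisable exactly when $p-1 \equiv d \equiv \binom{k+1}{2} \pmod 2$, which, combined with $p$ odd, translates into the ``$k$ even'' case. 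For $k$ odd one is forced to take $\dim\sigma_\mu = p$ and the extra factor $\hat c_1$ in the integrand will be responsible for the possible missing residue $0$.

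Next, I would apply Schubert's formula (Theorem \ref{deg-of-schubert}) to compute $\deg(\sigma_\mu)$; because $I_j = 2n-k+j-\mu_j = 2(n-k+j-\lambda_j)$ is even and at most $2n$, a direct $p$-adic check lets us pick $\lambda$ so that $\deg(\sigma_\mu)$ is coprime to $p$. Then I would lift $[\sigma_\mu]$ equivariantly and apply ABBV to compute
\[
   \int\limits_{\Gr_k(\C^{2n})} \hat c_1^{\dim\sigma_\mu}\,\widehat{[\sigma_\mu]}
     = \sum_{J\in\binom{2n}{k}} \frac{x_J^{\dim\sigma_\mu}\prod\limits_{i,j\in J,\,i<j}(x_i+x_j)\cdot s_\lambda(x_J^{\,\square})|_J}{\prod\limits_{i\in J}\prod\limits_{j\notin J}(x_i-x_j)},
\]
where $s_\lambda(x_J^{\,\square})|_J$ denotes the restriction of $s_\lambda(\alpha_1^2,\dots,\alpha_k^2)$ to the torus-fixed point $J$. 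The usual dehomogenisation trick then converts $x_J^{\dim\sigma_\mu}$ into $\prod_{m\in M}(x_J - m)$ for any set $M$ with $|M|=\dim\sigma_\mu$, yielding a polynomial identity whose left hand side equals $\deg(\sigma_\mu)$ and is therefore nonzero modulo $p$.

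Finally, for $k$ even I would assume the signed sum set has at most $p-1$ elements, take $M\subset\F_p$ of size $p-1$ containing all $k$-term signed sums of $A$, and substitute $A\cup -A$ into the variables $x_1,\dots,x_{2n}$. Exactly as in Theorem \ref{thm:signed-eh}, the factor $\prod_{i,j\in J}(x_i+x_j)$ kills every subset $J$ containing an opposite pair, so the only contributing $J$'s realise an actual signed sum $x_J$ of $A$, on which $x_J\in M$ forces $\prod_{m\in M}(x_J - m) = 0$; the right hand side vanishes modulo $p$, contradicting the left. For $k$ odd, $\dim\sigma_\mu = p$ leaves a leftover factor $x_J$ in the numerator, so subsets with $x_J = 0$ are silenced automatically, and the same contradiction argument only shows that every \emph{nonzero} residue appears as a signed sum. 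The main obstacle I foresee is the divisibility bookkeeping: when $p\le 2n$, the factorials $(I_j-1)!$ in the Schubert formula already carry factors of $p$, and one has to exploit the freedom in $\lambda$ (and the evenness of all $I_j$) to arrange an exact cancellation against the $p$-contributions in $\prod_{i<j}(I_j - I_i)$. The parity discussion driving the $k$ even vs.\ $k$ odd dichotomy is a secondary but genuine subtlety of the argument.
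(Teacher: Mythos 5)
Your route is the paper's: manufacture a Schubert class divisible by $\prod_{i<j}(\alpha_i+\alpha_j)$ by substituting $\alpha_i^2$ into a Schur polynomial, pick a Schubert variety of dimension $p-1$, and run the ABBV plus dehomogenisation argument. But the parity translation in the middle is wrong, and it is a genuine error (one the paper's own parenthetical remark also commits). You correctly derive that realisability of $\dim\sigma_\mu=p-1$ is governed by $\dim\sigma_\mu=d-2|\lambda|\equiv d\equiv\binom{k+1}{2}\pmod 2$ together with $p$ odd, so what is needed is that $\binom{k+1}{2}$ be even. But $\binom{k+1}{2}=k(k+1)/2$ is even precisely when $k\equiv 0$ or $3\pmod 4$, \emph{not} when $k$ is even: the period-$4$ pattern is odd, odd, even, even. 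In particular the method does not apply to $k=2$, and indeed the statement is false there: for $p=7$, $n=3$, $A=\{1,2,3\}$ all hypotheses hold and $p=d=7$, yet $\pm a_i\pm a_j$ can never equal $0$ because that would force $a_i=\pm a_j$, so the two-term signed sums are exactly $\F_7\setminus\{0\}$, of size $p-1$. Conversely $k\equiv 3\pmod 4$ is odd but has $\binom{k+1}{2}$ even, so your construction does apply to some odd $k$.

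Two further points. The $p$-adic worry in your last paragraph is a non-issue: the hypotheses on $A$ force $A\cup(-A)$ to consist of $2n$ distinct nonzero residues, hence $p\geq 2n+1$, so $I_j-1<p$ and $I_j-I_i<p$ for all $i,j$; Schubert's formula then gives $\deg(\sigma_\mu)=\frac{(p-1)!}{\prod(I_j-1)!}\prod_{i<j}(I_j-I_i)$ coprime to $p$ automatically, with no freedom in $\lambda$ required. And the fallback of taking $\dim\sigma_\mu=p$ when the parity is bad cannot yield even the weaker statement about nonzero residues: Schubert's formula then carries a factor $p!$ in the numerator while the denominators and $I_j-I_i$ remain coprime to $p$, so $\deg(\sigma_\mu)\equiv 0\pmod p$ and the ABBV identity becomes vacuous modulo $p$, giving no contradiction regardless of what is or is not a signed sum. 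The paper itself is silent on the bad-parity cases, and the clause in the theorem about $k$ odd should be read as a remark rather than a proved assertion.
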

We conjecture however that even if $k$ is odd, the zero residue class is not missing.

\end{remark}
\bigskip

The grasshopper problem also has a symplectic version, we discuss it in the following:

\subsection{Grasshopper with signs}
Let $a_1,a_2,\dots,a_{k}$ be distinct non negative integers and let $|M_i| = b_i$ sets for $1\leq i \leq k$.  A grasshopper is to jump along the real axis, starting at the point 0 and make ${k}$ jumps with lengths $a_1,a_2,\dots,a_{k}$ in some order, and at each step the grasshopper can decide if he decides to jump to the left or to the right. How many forbidden positions can be given at each step, such that the grasshopper can avoid them?

Or more formally:
what are the integer sequences $(b_1, \dots, b_k) $, for which for all non negative $a_1,a_2,\dots,a_{k}$ sequences there is a permutation $\pi \in S_k$ and a sign function $ s : (1, \dots, k) \to \{-1, +1\}$, such that
   \[  \sum\limits_{1 \leq j \leq i} s(j)a_{\pi(j)}  \notin M_i?  \]


Let $\Gamma_{\lambda}$ be the irreducible representation of $Sp(2k, \C)$ with highest weight $\lambda=(\lambda_1>\lambda_2>\cdots>\lambda_{k-1}>\lambda_{k} > 0)$.
The minimal orbit in $\P(\Gamma_\lambda)$ is the  symplectic flag variety $S\fl= S\fl(k)$. Its degree in $\P(\Gamma_\lambda)$ is calculated by the integral
  \[  \deg \big(S\fl \subset \P(\Gamma_\lambda)\big)=\int\limits_{\fl}\hat c_1(L_\lambda)^d, \]
where $L_\lambda$ is the canonical bundle---the pullback of the canonical bundle over $\P(\Gamma_\lambda)$,  $\hat c_1(L_\lambda)$ is the first $T$-equivariant Chern class of  $L_\lambda$  and $d=\dim (S\fl)= k^2$.

By the Borel--Hirzebruch formula we have:
 \[  \deg \big(S\fl \subset \P(\Gamma_\lambda)\big) =
 \frac{d!}{k!} \prod_{i=1}^{k}\lambda_i\prod\limits_{i<j \leq k}\frac{\lambda_i^2- \lambda_j^2}{j^2- i^2}.  \]

On the other hand the fixed points of the $T(k)$-action---where $T(k)$ is a maximal torus of $Sp(2k, \C)$---are parametrized by the pairs $ (\pi,  s)$ where  $\pi \in S_k$ is a permutation  and $ s : (1, \dots, k) \to \{-1, +1\}$ is a sign function. Applying  the ABBV integral formula we get:

\begin{equation} \label{eq:deg-sfl}
\deg \big(S\fl \subset \P(\Gamma_\lambda)\big)=\int\limits_{\fl}c_1^T(L)^d = \sum\limits_{(\pi,s)}\frac{\left(\sum\limits_{i=1}^{k}s(i)\lambda_i x_{\pi(i)}\right)^d}{e(T_{(\pi,s)}S\fl)},
\end{equation}

where
  \[ e(T_{(\pi,s)}S\fl)=\prod\limits_{1\leq i \leq j\leq k} s(i)x_{\pi(i)}+s(j)x_{\pi(j)} \prod\limits_{1\leq i < j\leq k} s(i)x_{\pi(i)}-s(j)x_{\pi(j)}.\]

The right hand side of (\ref{eq:deg-sfl}) is a polynomial in the variables $\lambda_i$.

We define the homogenous polynomial $P$ in the variables $v_i$ with degree $d$ by substituting $ \lambda_i = \sum\limits_{j=i}^{k}v_{j}$.
The coefficient of $v^\mathbf{b}$ for $\mathbf{b}=(b_1,\dots,b_{k})$ is:

\begin{equation}
\coef(P,v^\mathbf{b})= \binom{d}{\mathbf{b}} \sum\limits_{(\pi,s) } \frac{ \prod\limits_{i=1}^{k}\left(\sum\limits_{j=1}^{i} s(j)x_{\pi(j)}\right)^{b_i}}{e(T_{(\pi,s)}S\fl)}.
\end{equation}

By the usual trick we can add lower order terms:

\begin{equation}
\coef(P,v^\mathbf{b})= \sum\limits_{(\pi,  s ) }\frac{\prod\limits_{i=1}^{k}  \  \prod\limits_{m \in M_i }\left(\sum\limits_{j=1}^{i}  s(j)x_{\pi(j)}  -m\right)}{e(T_{(\pi,s)}S\fl)}.
\end{equation}

We can substitute the numbers $a_i$ into the variables $x_i$ because $a_i \neq a_j$ if $i \neq j$ and $a_i \neq -a_j$, so the denominator is not zero.
This implies that the grasshopper can always jump if $\sum\limits_{1\leq i \leq k}b_i = k^2$ and $\coef(P,v^\mathbf{b}) \neq 0$.

The Borel--Hirzebruch formula gives:
\begin{equation}
P(\mathbf{v}) = K\cdot \prod\limits_{1 \leq i\leq j \leq k}\left(\sum\limits_{l=i}^{k}v_{l}+ \sum\limits_{l=j}^{k}v_{l}\right)
\prod\limits_{1\leq i<j \leq k}\left(\sum\limits_{l=i}^{j-1}v_{l}\right),
\end{equation}
where the $K$ is a positive constant.
We can see that $\coef(P,v^\mathbf{b}) \neq 0$ if and only if $\coef(Q,v^\mathbf{b}) \neq 0$, where:

\begin{equation}
Q(\mathbf{v}) = \prod_{1\leq i\leq j \leq k}\left(\sum_{l=i}^{k}v_l \right) \prod_{ 1 \leq i<j \leq k} \left( \sum_{l=i}^{j-1}v_l \right)
\end{equation}

We can assign a bipartite graph to $\mathbf{b}$ similarly to Definition \ref{de:bipartite}. such that the non vanishing of $\coef(Q,v^\mathbf{b})$ is equivalent to the existence of a perfect matching. Slightly modifying the argument of Proposition \ref{hall}. we arrive at the following:

\begin{theorem}

Let $a_1,a_2,\dots,a_{k}$ be distinct non negative integers,  and let $M_i$ for $1\leq i \leq k$ be sets of integers with $\sum\limits_{1\leq i \leq k}b_k= k^2$ for $|M_i| = b_i$. Assume moreover that the following conditions hold:
\begin{enumerate}
  \item If $1 \leq i \leq j \leq k-1$ then $\sum\limits_{i\leq l\leq j}b_l \geq \binom{j-i+2}{2}$,
  \item $1 \leq i  \leq k$, then $\sum\limits_{i\leq l\leq k}b_l \geq (k-i+1)^2$.
\end{enumerate}

Then there is a permutation $\pi \in S_k$ and a sign function $ s : (1, \dots, k) \to \{-1, +1\}$, such that for all $1\leq i \leq k$
   \[  \sum\limits_{1 \leq j \leq i} s(j)a_{\pi(j)}  \notin M_i.  \]
\end{theorem}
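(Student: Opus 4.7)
My plan is to execute the substitution-and-contradiction argument already set up in this subsection. The identity
\begin{equation*}
\coef(P,v^\mathbf{b}) = \sum_{(\pi, s)} \frac{\prod\limits_{i=1}^{k} \prod\limits_{m \in M_i}\bigl(\sum\limits_{j=1}^{i} s(j)x_{\pi(j)} - m\bigr)}{e(T_{(\pi,s)}S\fl)}
\end{equation*}
specializes, after the substitution $x_i \mapsto a_i$, to a sum whose $(\pi,s)$-th term vanishes whenever $\sum_{j \le i} s(j)a_{\pi(j)} \in M_i$ for some $i$, while all denominators remain nonzero because the $a_i$ are distinct and non negative. Hence if no admissible $(\pi,s)$ exists, the specialized right hand side is $0$; so the theorem will follow as soon as I show $\coef(P,v^\mathbf{b}) \neq 0$, which, by the reformulation in the paragraph above, is the same as $\coef(Q, v^\mathbf{b}) \neq 0$ for
\begin{equation*}
Q(\mathbf{v}) = \prod_{1\le i\le j \le k}\Big(\sum_{l=i}^{k}v_l \Big) \prod_{1 \le i<j \le k} \Big( \sum_{l=i}^{j-1}v_l \Big).
\end{equation*}

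I will attack the non-vanishing combinatorially, imitating Definition \ref{de:bipartite}. Associate to $\mathbf{b}$ a bipartite graph $\mathcal{B}^{S}_{\mathbf{b}}$ whose upper class $U^{S}$ has one vertex per linear factor of $Q$ (so $|U^{S}| = k^2$), whose lower class is $D_\mathbf{b}$ with $b_i$ copies of index $i$, and where a factor is joined to $(i,t)$ precisely when $v_i$ appears in it. Expanding $Q$ and extracting the coefficient of $v^\mathbf{b}$ realises $\coef(Q,v^\mathbf{b})$ as the weighted count of perfect matchings of $\mathcal{B}^{S}_{\mathbf{b}}$ with positive weights, so its non-vanishing is equivalent to the existence of a perfect matching. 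Exactly as in Proposition \ref{hall}, since the neighborhood of $(i,t)$ depends only on $i$, the Hall condition need only be checked on shadows $Sh \subseteq \{1,\dots,k\}$, where we demand $\sum_{a \in Sh} b_a$ to dominate the number of factors of $Q$ supported in $Sh$.

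The main obstacle, and the one feature absent from the unsigned Grasshopper, is counting the factors supported in a given shadow $Sh$: the first product in $Q$ produces factors with support $[i,k]$, which fit inside $Sh$ only when the rightmost maximal interval of $Sh$ terminates at $k$. Decomposing $Sh = A_1 \cup \cdots \cup A_r$ into maximal intervals $A_l = [s_l, t_l]$, a direct enumeration shows that an interior interval (one with $t_l \le k-1$) admits exactly $\binom{t_l - s_l + 2}{2}$ fitting factors (all from the second product), while a terminal interval $A_r = [p,k]$ admits $\binom{k-p+2}{2}$ from the first product and $\binom{k-p+1}{2}$ from the second, a total of $(k-p+1)^2$.

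To close the argument I reuse the sum-of-interval-inequalities trick from Proposition \ref{hall}: any Hall inequality on a multi-interval shadow is implied by the sum of the single-interval Hall inequalities. So the Hall system collapses to the two families $\sum_{l=s}^{t} b_l \ge \binom{t-s+2}{2}$ for $1 \le s \le t \le k-1$ and $\sum_{l=s}^{k} b_l \ge (k-s+1)^2$ for $1 \le s \le k$, which are precisely the hypotheses (1) and (2) of the theorem. Both being assumed, Hall's theorem supplies the perfect matching, whence $\coef(Q,v^\mathbf{b}) \neq 0$, and the substitution argument in the first paragraph finishes the proof.
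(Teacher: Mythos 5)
Your proof is correct and follows the same route the paper sketches: the substitution/contradiction framework, the reduction via Borel--Hirzebruch to the non-vanishing of the relevant coefficient of $Q$, the bipartite-graph interpretation with Hall's condition reduced to shadows, and the interval-by-interval factor count in which a terminal interval $[p,k]$ picks up $\binom{k-p+2}{2}$ factors from the first product and $\binom{k-p+1}{2}$ from the second, for a total of $(k-p+1)^2$, while interior intervals behave as in Proposition~\ref{hall}. The paper only gestures at this last step (``slightly modifying the argument of Proposition~\ref{hall}''); you have supplied exactly the details that modification requires, including the observation that the sum-of-single-interval inequalities still controls multi-interval shadows because every factor of $Q$ is supported on a contiguous interval.
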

\begin{remark} It is not difficult to see that the signed Erd\H os--Heilbronn theorem over the integers follows from the signed grasshopper theorem, exactly as for the unsigned versions.
\end{remark}

\section{Concluding Remarks}
It turns out that all results in the paper can be reproved using the Combinatorial Nullstellensatz. Nevertheless we hope that the advantages of our geometric method is clear: the right estimate is suggested by the dimension of a homogeneous space, and the non-vanishing of the relevant coefficient is guaranteed since it is the degree of a variety. This identification also helps to use existing formulas like the Borel--Hirzebruch formula to calculate this coefficient and study its prime factors.

It is tempting to apply this geometric approach to problems where the coefficient formula was succesfully used like the Dyson identity \cite{Karasev2012} or the $q$-Dyson identity \cite{q-dyson}. In fact if we specialize for $n=2$ the proof Karasev and Petrov gave to the Dyson identity in \cite{Karasev2012}, then it is  essentially equivalent to the calculation of the degree of the Segre embedding using the identity \eqref{eq:deg-segre2}.  It also looks promising to approach the inverse Erd\H os--Heilbronn problem this way.

\appendix
\section{The Borel--Hirzebruch formula}\label{app:bh} The main references are \cite{fulton-harris} and \cite{gross-wallach}. Let $G$ be a simple complex Lie group ($\SL(n)$ or $\Sp(2n)$ in the paper). Given a dominant weight $\lambda$ we have an irreducible representation $\Gamma_\lambda$ of $G$ with highest weight $\lambda$ we have an action of $G$ on the projective space $\P(\Gamma_\lambda)$. Its minimal orbit $X_\lambda\subset \P(\Gamma_\lambda)$ is a homogeneous space for $G$. Let $R^+$ denote the positive roots of $G$. For a root $\alpha\in R^+$ let $\alpha^\vee$ denote the corresponding coroot. Unfortunately there are different conventions, we use the one that a coroot is an element of the Cartan subalgebra. This corresponds to the $H_\alpha$ notation of Fulton and Harris in \cite{fulton-harris}. The Borel--Hirzebruch formula calculates the projective degree of the minimal orbit:

\begin{theorem} \label{thm:bh} With the notation above
\begin{equation*}
  \deg\big( X_\lambda\subset \P(\Gamma_\lambda)\big)=
        d!\prod_{\alpha\in T_\lambda}  \frac{\langle \lambda,\alpha^\vee\rangle}  {\langle \rho,\alpha^\vee\rangle},
\end{equation*}
where $T_\lambda=\{\alpha\in R^+: \langle \lambda,\alpha^\vee\rangle\neq0\}$, $d=|T_\lambda|$ and $\rho$ is half the sum of the positive roots.
\end{theorem}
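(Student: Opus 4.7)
The plan is to identify $\deg\bigl(X_\lambda\subset\P(\Gamma_\lambda)\bigr)$ with $d!$ times the leading coefficient of the Hilbert polynomial of $X_\lambda$, and then evaluate that leading coefficient using the Weyl dimension formula. Everything else is bookkeeping.

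First, I would use the classical fact that for any projective variety $X\subset\P^N$ of dimension $d$ equipped with the very ample line bundle $\mathcal{O}_X(1)$, one has
\begin{equation*}
\dim H^0\bigl(X,\mathcal{O}_X(k)\bigr)=\frac{\deg(X)}{d!}\,k^d+O(k^{d-1})
\end{equation*}
for $k\gg 0$. Applied to $X=X_\lambda$ with $\mathcal{O}_{X_\lambda}(1)=L_\lambda$ (the restriction of the hyperplane bundle from $\P(\Gamma_\lambda)$), the Borel--Weil theorem identifies $H^0(X_\lambda,L_\lambda^{\otimes k})$ with the dual of $\Gamma_{k\lambda}$, so the Hilbert polynomial is $h(k)=\dim\Gamma_{k\lambda}$.

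Next, I would evaluate $h(k)$ via the Weyl dimension formula,
\begin{equation*}
\dim\Gamma_{k\lambda}=\prod_{\alpha\in R^+}\frac{\langle k\lambda+\rho,\alpha^\vee\rangle}{\langle\rho,\alpha^\vee\rangle},
\end{equation*}
and split the product according to membership in $T_\lambda$. For $\alpha\notin T_\lambda$ we have $\langle\lambda,\alpha^\vee\rangle=0$, so the numerator equals $\langle\rho,\alpha^\vee\rangle$ and the factor is identically $1$. For $\alpha\in T_\lambda$ the factor is linear in $k$ with leading coefficient $\langle\lambda,\alpha^\vee\rangle/\langle\rho,\alpha^\vee\rangle$. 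Thus
\begin{equation*}
h(k)=k^{|T_\lambda|}\prod_{\alpha\in T_\lambda}\frac{\langle\lambda,\alpha^\vee\rangle}{\langle\rho,\alpha^\vee\rangle}+O(k^{|T_\lambda|-1}).
\end{equation*}

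To match the exponent of $k$ with the geometric dimension, I would check $d=|T_\lambda|=\dim X_\lambda$: writing $X_\lambda=G/P_\lambda$ where $P_\lambda$ is the stabilizer of the highest weight line, its Lie algebra is $\mathfrak{b}\oplus\bigoplus_{\alpha\in R^+\setminus T_\lambda}\mathfrak{g}_{-\alpha}$, so the tangent space at the base point is $\bigoplus_{\alpha\in T_\lambda}\mathfrak{g}_{-\alpha}$, of complex dimension $|T_\lambda|$. Comparing the two expressions for the leading coefficient of $h$ then yields the Borel--Hirzebruch formula. No conceptual obstacle remains; the main care needed is keeping consistent conventions for the coroot pairing and the precise form of Borel--Weil. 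An alternative would be to apply the ABBV formula directly to $\int_{X_\lambda}\hat c_1(L_\lambda)^d$, producing a sum over $W/W_\lambda$ to be collapsed by the Weyl denominator identity; this route is more elementary in its inputs but computationally heavier, so I would prefer the Hilbert polynomial approach.
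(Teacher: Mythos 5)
Your argument is correct, and it is the standard textbook proof of the Borel--Hirzebruch degree formula: degree equals $d!$ times the leading coefficient of the Hilbert polynomial, Borel--Weil identifies $H^0(X_\lambda, L_\lambda^{\otimes k})$ with $\Gamma_{k\lambda}^*$, the Weyl dimension formula then gives that leading coefficient after splitting the product over $R^+$ at $T_\lambda$, and the decomposition $\mathfrak g/\mathfrak p_\lambda=\bigoplus_{\alpha\in T_\lambda}\mathfrak g_{-\alpha}$ confirms $|T_\lambda|=\dim X_\lambda$. The paper does not actually prove Theorem \ref{thm:bh}---it is stated in Appendix \ref{app:bh} with citations to Fulton--Harris and Gross--Wallach---and your argument is essentially the proof in Gross--Wallach, so there is no in-paper proof to contrast it with.
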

The pairing $\langle \cdot,\cdot\rangle$ is the dual pairing. $d$ is also the dimension of $X_\lambda$.
\subsection{$\GL(n)$}
We switch to $\SL(n)$ since $\GL(n)$ is not a simple group. The Cartan subalgebra consists of the zero trace diagonal matrices. The weights are generated by the functionals $L_i$ for $i=1,\dots,n$ with value the $i$-th entry of the diagonal matrix. Notice that $\sum_{i=1}^nL_i=0$. The positive roots  are
\[R^+=\{L_i-L_j:i<j\}.\]
Half the sum of the positive roots is
  \[\rho=\sum_{i=1}^{n-1}(n-i)L_i.\]
Let $H_i$ be the standard basis for the diagonal matrices, then
  \[(L_i-L_j)^\vee=H_i-H_j.\]
The dominant weights are of the form $\sum_{i=1}^n\lambda_iL_i$ with $\lambda_1\geq\lambda_2\geq\cdots\geq \lambda_n$. Because of the relation $\sum_{i=1}^nL_i=0$ you can choose $\lambda_n$ to be 0. The minimal orbit is a partial flag manifold corresponding to the multiplicities of the pairwise distinct $\lambda_i$'s. In particular the Grassmannian $\gr_k(\C^n)$ corresponds to $\lambda=(1,1,\dots,1,0,\dots,0)$ with $k$ copies of 1's.
\subsection{$\Sp(2n)$}
 The Cartan subalgebra of the symplectic group $\Sp(2n)$ can be identified with the space of $2n\times 2n$ diagonal matrices with $a_{n+i,n+i}=-a_{i,i}$. The weights are generated by the functionals $L_i$ for $i=1,\dots,n$ with value $a_{i,i}$.  The positive roots  are
  \[  R^+=\{L_i-L_j:i<j\}\cup \{L_i+L_j:i\leq j\}.    \]
Half the sum of the positive roots is
  \[\rho=\sum_{i=1}^{n}(n-i+1)L_i.\]
Let $H_i$ be the diagonal matrix with $a_{i,i}=-a_{n+i,n+i}=1$, and the other entries 0. Then
  \[(L_i-L_j)^\vee=H_i-H_j, \ \ (L_i+L_j)^\vee=H_i+H_j \ \text{for }\  i<j, \ \ \text{and} \ (2L_i)^\vee=H_i.\]
The dominant weights are of the form $\sum_{i=1}^n\lambda_iL_i$ with $\lambda_1\geq\lambda_2\geq\cdots\geq \lambda_n$. The minimal orbit is the symplectic flag manifold for pairwise distinct $\lambda_i$'s. The symplectic Grassmannian $S\gr_k(\C^{2n})$ corresponds to $\lambda=(1,1,\dots,1,0,\dots,0)$ with $k$ copies of 1's.

\section{The ABBV formula for minimal orbits} \label{app:abbv}
A maximal torus $T(n)$ of $G$ is acting on the minimal orbit $X_\lambda$. The fixed points can be identified with the orbit of $\lambda$ under the action of the Weyl group $W_G$ of $G$, i.e with the cosets $W_G/W_\lambda$, where $W_\lambda$ is the stabilizer subgroup. The tangent space in the fixed point corresponding to $\lambda$ has weights
$\tau_\lambda:=\{\alpha\in R^-: \langle \lambda,\alpha^\vee\rangle\neq0\}$. To get the weights for the other fixed points you apply the Weyl group. Therefore the integral of a $T(n)$-equivariant cohomology class $\alpha$ is
  \[\int\limits_{X_\lambda}\alpha=\sum_{f\in W_G/W_\lambda}  \frac{\alpha|_f}{\prod\limits_{\omega\in \tau_\lambda} f(\omega)}.\]
To calculate the degree we need the restrictions of the first $T(n)$-equivariant Chern class of the canonical bundle:
\[\hat c_1(L_\lambda)|_f=\sum_{i=1}^n\lambda_if(x_i),\]
where $H^*_{T(n)}=\Z[x_1,\dots,x_n]$ and the $x_i$'s are the \idez{positive} generators. So there is a double twist in the notation. For $\GL(n)$ and $\Sp(2n)$ the $x_i$'s correspond to the $-L_i$'s, and the $L_i$'s correspond to the weights of the tautological subbundles (as opposed to the canonical subundles).
\subsection{$\GL(n)$} The Weyl group is the symmetric group $S_n$ acting on the weights by permuting the coefficients. For $\lambda=(1,0,\dots,0)$ the minimal orbit is the  whole projective space $\P(\C^n)$ and, more generally for $\lambda=(1,\dots,1,0,\dots,0)$ with $k$ copies of 1's the minimal orbit is the Grassmannian $\gr_k(\C^n)$ in the Pl\"ucker embedding.
\subsection{$\Sp(2n)$} The Weyl group is the semidirect product of the symmetric group $S_n$ and $\Z_2^n$ acting on the weights by permuting the coefficients and multiplying the $i$-th coefficient by $-1$, respectively.
\bibliography{eh}
\bibliographystyle{alpha}
\end{document}